\documentclass[11pt]{article}
\usepackage{mathrsfs}
\usepackage{amsfonts}
\usepackage{amsmath}
\usepackage{amssymb}
\usepackage{latexsym}
\usepackage{amsthm}
\usepackage{indentfirst}
\usepackage{graphicx}
\usepackage{cite}
\usepackage{bm}
\usepackage{algorithmic}
\usepackage[algosection,ruled]{algorithm2e}


 \textwidth = 16.5cm
 \textheight = 22.3cm
 \topmargin 0cm
 \evensidemargin 0cm
 \oddsidemargin 0cm


\newtheorem{theorem}{Theorem}[section]
\newtheorem{lemma}{Lemma}[section]
\newtheorem{corollary}{Corollary}[section]

\theoremstyle{definition}



\newcommand{\I}{\bm{\mathrm{I}}}
\newcommand{\me}{\mathrm{e}}

\newcommand{\ball}{\bm{\mathrm{B}}}

\newcommand{\NS}{\mathbb{N}}
\newcommand{\RS}{\mathbb{R}}
\newcommand{\inv}[1]{{#1}^{-1}}
\newcommand{\tran}[1]{{#1}^\mathrm{\top}}
\newcommand{\diag}{\mathrm{diag}}

\renewcommand{\vec}[1]{\bm{\mathrm{#1}}}

\newcommand{\vc}{\bm{\mathrm{c}}}
\newcommand{\vp}{\bm{\mathrm{p}}}
\newcommand{\vq}{\bm{\mathrm{q}}}
\newcommand{\vrho}{\bm{\mathrm{\rho}}}
\newcommand{\vdc}{\Delta\bm{\mathrm{\vc}}}

\newcommand{\udots}{\mathinner{\mskip1mu\raise1pt\vbox{\kern7pt\hbox{.}}
\mskip2mu\raise4pt\hbox{.}\mskip2mu\raise7pt\hbox{.}\mskip1mu}}


\numberwithin{equation}{section}

\allowdisplaybreaks


\begin{document}

\title{A Globally Convergent Inexact Newton-Like Cayley Transform Method for
 Inverse Eigenvalue Problems}
\author{Yonghui Ling
\footnote{Corresponding author.
\newline \indent
    {\it E-mail:} lingyinghui@163.com (Y. Ling), xxu@zjnu.edu.cn (X. Xu).}
\\
Department of  Mathematics, Zhejiang University, Hangzhou 310027,
China
\\
Xiubin Xu \footnote{The second author's work was supported in part
by the National Natural Science Foundation of China (Grant No.
61170109).}
\\
Department of Mathematics, Zhejiang Normal University, Jinhua
321004, China }

\maketitle

\begin{abstract}
We propose a inexact Newton method for solving inverse eigenvalue
problems (IEP). This method is globalized by employing the classical
backtracking techniques. A global convergence analysis of this
method is provided and the R-order convergence property is proved
under some mild assumptions. Numerical examples demonstrate that the
proposed method is very effective for solving the IEP with distinct
eigenvalues.
\\
\par
\noindent\textbf{Keywords:} Inverse eigenvalue problem; Inexact
Newton method; Backtracking; Cayley transform
\end{abstract}

\section{Introduction}

In the present paper, we consider inverse eigenvalue problems (IEP)
which are defined as follows. Let $\vec{c} = \tran{(c_1, c_2,
\ldots, c_n)} \in \RS^n$ and $\{A_i\}_{i = 0}^n$ be a sequence of
real symmetric $n \times n$ matrices. Define
\begin{equation}
\label{mat:A(c)} A(\vec{c}) = A_0 + \sum_{i = 1}^n c_i A_i
\end{equation}
and denote its eigenvalues by $\{\lambda_i(\vec{c})\}_{i = 1}^n$
with the increasing order $\lambda_1(\vec{c}) \leq
\lambda_2(\vec{c}) \leq \cdots \leq \lambda_n(\vec{c})$. Given $n$
real numbers $\{\lambda_i^*\}_{i = 1}^n$ which are arranged in
increasing order $\lambda_1^* \leq \lambda_2^* \leq \cdots \leq
\lambda_n^*$, the IEP is to find a vector $\vec{c}^* \in \RS^n$ such
that
\begin{equation}
\label{eq:IEP} \lambda_i(\vec{c}^*) = \lambda_i^*, \ \ i = 1, 2,
\ldots, n.
\end{equation}
Such vector $\vec{c}^*$  is called a solution of the IEP. This type
of inverse problem arises in a variety of applications, for
instance, the inverse Toeplitz eigenvalue problem
\cite{Trench1997,Diele2004}, inverse Sturm-Liouville's problem,
inverse vibrating string problem and the pole assignment problem,
see \cite{Chu1998,ChuGolub2002,ChuGolub2005} and the references
therein for more details on these applications.

Define $f: \RS^n \to \RS^n$ by
\begin{equation}
\label{fun:f(c)} f(\vec{c}) = \tran{(\lambda_1(\vec{c}) -
\lambda_1^*, \lambda_2(\vec{c}) - \lambda_2^*, \ldots,
\lambda_n(\vec{c}) - \lambda_n^*)}.
\end{equation}
Then, solving IEP (\ref{eq:IEP}) is equivalent to solving the
nonlinear equation $f(\vec{c}) = \vec{0}$ on $\RS^n$. It is clear
that, $\vec{c}^*$ is a solution of the IEP if and only if
$\vec{c}^*$ is a solution of the equation $f(\vec{c}) = \vec{0}$.
Based on this equivalence, Newton's method can be applied to the
IEP, and it converges quadratically \cite{Friedland1987}. As it is
known, each iteration of Newton's method involves solving a complete
eigenproblem for the matrix $A(\vec{c})$. To overcome this drawback,
different Newton-like methods have been proposed and studied
\cite{ChanXuZhou1999, ChanChungXu2003}.  To alleviate the
over-solving problem, Bai, Chan and Morini presented in
\cite{BaiChanMorini2004} an inexact Cayley transform method for
solving the nonlinear system $f(\vec{c}) = \vec{0}$. To avoid the
computation of the approximate Jacobian equations, Shen and Li
proposed in \cite{ShenLiJin2011, ShenLi2012} Ulm-like methods for
solving the IEPs. However, all these numerical methods for solving
the IEP converge only locally.

In this paper, we study the numerical methods with global
convergence property for solving the IEP.  Since the IEP is a
nonlinear equation, we review some classical work on solving the
general nonlinear equation $f(\vc) = \vec{0}$. Among the inexact
Newton-type methods where a line-search procedure is used, an
inexact Newton backtracking method was proposed in
\cite{Eisenstat1994}. It performed backtracking along the inexact
Newton step, and computational results on a large set of test
problems have shown its robustness and efficiency
\cite{Eisenstat1996,Pernice1998}.

The purpose of the present paper is, motivated by the inexact Newton
backtracking method proposed in \cite{Eisenstat1994}, to propose an
inexact Newton-type method which combines with the Cayley transform
method for solving the IEP. Under the classical assumption, which is
also used in \cite{BaiChanMorini2004,ChanChungXu2003,ShenLi2012},
that the given eigenvalues are distinct and the Jacobian matrix
$J(\vc^*)$ is invertible, we show that this method is global
convergent. To further improve the feasibility, we also propose a
hybrid method for solving the IEP by using a simpler condition in
the backtracking loop. Some numerical examples are reported to
illustrate the effectiveness of the proposed hybrid method with
distinct eigenvalues.

The paper is organized as follows. In Section 2, a global inexact
Newton-type algorithm is proposed. The global convergence analysis
is given in Section 3. In Section 4, we present a hybrid algorithm
for solving the IEP. And finally in Section 5, some numerical
examples are given to confirm the numerical effectiveness and the
good performance of our algorithm.

\section{A Global Inexact Newton-Type Method}

In this section, we present our algorithm. Let $\RS^{n \times n}$
denote the set of all real $n \times n$ matrices. Let $\|\cdot\|$
and $\|\cdot\|_F$ denote the 2-norm and the Frobenius norm in
$\RS^n$, respectively. The induced 2-norm in $\RS^{n \times n}$ is
also denoted by $\|\cdot\|$, i.e.,
$$
\|A\| := \sup_{\vec{x} \in \RS^n, \vec{x} \neq \vec{0}} \frac{\|A
\vec{x}\|}{\|\vec{x}\|}, \ \ \ A \in \RS^{n \times n}.
$$
Then, we have $\|A\| \leq \|A\|_F$ for any $A \in \RS^{n \times n}$.
Let $\lambda_1(\vec{c}) \leq \lambda_2 (\vec{c}) \leq \cdots \leq
\lambda_n (\vec{c})$ be the eigenvalues of matrix $A(\vec{c})$, and
let $\{\vec{q}_i(\vec{c})\}_{i = 1}^n$ be the normalized
eigenvectors corresponding to $\{\lambda_i(\vec{c})\}_{i = 1}^n$.
Define $J(\vec{c}) = ([J(\vec{c})]_{ij})$ by
\begin{equation}
\label{comp:J(c)ij} [J(\vec{c})]_{ij} := \tran{\vec{q}_i(\vec{c})}
A_j \vec{q}_i(\vec{c}), \ \ \ 1 \leq i, j \leq n.
\end{equation}
Let $\{\lambda_i^*\}_{i = 1}^n$ be given with $\lambda_1^* \leq
\lambda_2^* \leq \cdots \leq \lambda_n^*$ and write $\vec{\lambda}^*
= \tran{(\lambda_1^*, \lambda_2^*, \ldots, \lambda_n^*)}$.

The Cayley transform method for computing approximately the
eigenproblem of the matrix $A(\vc)$ was proposed in
\cite{Friedland1987} and was applied in
\cite{BaiChanMorini2004,ShenLi2012}. We now recall this method and
then apply it to our algorithm. Suppose that $\vc^*$ is a solution
of the IEP. Then, there exists an orthogonal matrix $Q_*$ such that
\begin{equation}
\label{eq:Q*A(c*)Q*} \tran{Q}_* A(\vc^*) Q_* = \diag\{\lambda_1^*,
\lambda_2^*, \ldots, \lambda_n^*\} \triangleq \Lambda^*.
\end{equation}
Assume that $\vc, \vrho$ and $P$ are the current approximations of
$\vc^*, \vec{\lambda}^*$ and $Q_*$, respectively. Define $\me^Y :=
\tran{P}Q_*$, where $Y$ is a skew-symmetric matrix. Then,
(\ref{eq:Q*A(c*)Q*}) can be rewritten as:
\begin{align}
\tran{P} A(\vc^*) P & = \me^Y \Lambda^* \me^{-Y} \nonumber\\
& = (\I + Y + \frac{1}{2}Y^2 + \cdots) \Lambda^* (\I - Y +
\frac{1}{2}Y^2 + \cdots) \nonumber\\
& = \Lambda^* + Y \Lambda^* - \Lambda^* Y + O(\|Y\|^2).
\label{eq:PA(c*)P}
\end{align}
Based on (\ref{eq:PA(c*)P}), we define the new approximation
$\vc^{\text{new}}$ of $\vc^*$ by neglecting the second-order terms
in $Y$:
\begin{equation}
\label{eq:PA(cnew)P} \tran{P} A(\vc^{\text{new}}) P = \Lambda^* + Y
\Lambda^* - \Lambda^* Y.
\end{equation}
By equating the diagonal elements in (\ref{eq:PA(cnew)P}), we have
$$
\lambda_i^* = \tran{\vec{p}_i(\vc)} A(\vc^{\text{new}})
\vec{p}_i(\vc), \ \ \ i = 1, 2, \ldots, n,
$$
where $\{\vec{p}_i(\vc)\}_{i = 1}^n$ are the column vectors of $P$.
Thus, once we get $\vc^{\text{new}}$ by solving the Jacobian
equation, we can obtain $Y$ by equating the off-diagonal elements in
(\ref{eq:PA(cnew)P}), i.e.,
\begin{equation}
\label{comp:Yij} [Y]_{ij} := \frac{\tran{\vec{p}_i(\vc)}
A(\vc^{\text{new}}) \vec{p}_j(\vc)}{\lambda_j^* - \lambda_i^*}, \ \
\ 1 \leq i \neq j \leq n.
\end{equation}

In order to update the new approximation  $P^{\text{new}}$ of $Q_*$,
we construct an orthogonal matrix $U$ using Cayley's transform
\begin{equation}
\label{mat:U} U := (\I + \frac{1}{2} Y)\inv{(\I - \frac{1}{2}Y)}
\end{equation}
and set $P^{\text{new}} = P U$, that is, we can obtain
$P^{\text{new}}$ by solving
\begin{equation}
\label{eq:solve_Pnew} (\I + \frac{1}{2} Y) P^{\text{new}} = (\I -
\frac{1}{2} Y) P.
\end{equation}
Finally, the new approximations of eigenvalues can be obtained by
\begin{equation}
\label{comp:rhoinew} \rho_i^{\text{new}} :=
\tran{\vec{p}_i(\vc^{\text{new}})} A(\vc^{\text{new}})
\vec{p}_i(\vc^{\text{new}}), \ \ \ i = 1, 2, \ldots, n,
\end{equation}
where $\{\vec{p}_i(\vc^{\text{new}})\}_{i = 1}^n$ are the column
vectors of $P^{\text{new}}$.

Note that, (\ref{eq:solve_Pnew}) can be computed as follows. Compute
$H := (I - \frac{1}{2}Y) \tran{P}$ and let $\vec{h}_i$ be the $i$th
column of $H$ at first. Then, solve $\vec{w}_i, i = 1, 2, \ldots,
n$, iteratively from the $n$ linear systems:
\begin{equation}
\label{eq:solve_wi} (I + \frac{1}{2}Y)\vec{w}_i = \vec{h}_i, \ \ \ i
= 1, 2, \ldots, n.
\end{equation}
Finally, set $P^{\text{new}} := \tran{[\vec{w}_1, \ldots,
\vec{w}_n]}$. Since $P$ is an orthogonal matrix and $Y$ is
skew-symmetric matrix, we see that $P^{\text{new}}$ must be
orthogonal. To maintain the orthogonality of $P^{\text{new}}$,
(\ref{eq:solve_wi}) cannot be solved inexactly. One could expect
that it requires only few iterations to solve each system of
(\ref{eq:solve_wi}). This is due to the fact that, as $\{\vc^k\}$
converges to $\vc^*$, $\|Y_k\|$ converges to zero, see \cite[eq.
(3.64)]{Friedland1987}. Consequently, the coefficient matrix on the
left-hand side of (\ref{eq:solve_wi}) approaches the identity matrix
in the limit.

For solving the general nonlinear equation $f(\vec{x}) = \vec{0}$,
linesearch techniques \cite{Dennis1996} are often used to enlarge
the convergence basin of a locally convergent method. They are based
on a globally convergent method for a problem of the form
$\min\limits_{\vec{x} \in \RS^n} M(\vec{x})$, where $M$ is an
appropriately chosen merit function whose global minimum is a zero
of $f$. In these cases, for a given direction $\vec{s} \in \RS^n$,
we have the iteration form $\vec{x}_{k + 1} = \vec{x}_k + \alpha
\vec{s}$, where $\alpha \in (0, 1]$ is such that $M(\vec{x}_k +
\alpha \vec{s}) < M(\vec{x}_k)$. The existence of such an $\alpha$
is ensured if there exists an $\alpha_0 > 0$ such that $M(\vec{x}_k
+ \alpha \vec{s}) < M(\vec{x}_k)$ for all $\alpha < \alpha_0$.

In typical linesearch strategies, the step length $\alpha$ is chosen
by using so-called backtracking approach. Among the backtracking
method, inexact Newton backtracking method (INB)
\cite{Eisenstat1994} is a globally convergent process where the
$k$th iteration of an inexact Newton method is embedded in a
backtracking strategy. The merit function $M$ of INB is usually used
$M := \|f\|$, see for example
\cite{Eisenstat1994,Eisenstat1996,Pernice1998,Bellavia2001}. Thanks
to (\ref{fun:f(c)}), this will involve computing $\lambda_i(\vc^k)$
of $A(\vc^k)$ which are costly to compute. Our intention here is to
replace them by the Rayleigh quotient (see (\ref{comp:rhoinew})). In
Section 3 we will show that this replacement retain superlinear and
global convergence.

The details of our algorithm for solving the IEP are specified as
Algorithm \ref{al:GBIEP_Cayley}. In step 7, the following sufficient
decrease in the merit function $\|\vrho(\vc) - \vec{\lambda}^*\|$
based on the Rayleigh quotient is provided:
$$
\|\vrho(\vc^k + \vdc^k) - \vec{\lambda}^*\| \leq (1 - \xi(1 -
\eta_k)) \|\vrho(\vc^k) - \vec{\lambda}^*\|.
$$
The while loop in step 7 is also called backtracking loop below.

\begin{algorithm}
\caption{Inexact Newton Backtracking Cayley Transform Method for
IEP} \label{al:GBIEP_Cayley} For any $\vec{c}^0 \in \RS^n,
\eta_{\text{max}} \in [0,1), \xi \in (0,1), 0 < \theta_{min} <
\theta_{max} < 1$. Compute the orthonormal eigenvectors
$\{\vec{q}_i(\vec{c}^0)\}_{i = 1}^n$ and the eigenvalues
$\{\lambda_i(\vec{c}^0)\}_{i = 1}^n$ of $A(\vec{c}^0)$. Let
\begin{align}
P_0 & := [\vp_1(\vc^0), \vp_2(\vc^0), \ldots, \vp_n(\vc^0)] =
[\vq_1(\vc^0), \vq_2(\vc^0), \ldots, \vq_n(\vc^0)], \label{mat:P0}\\
\vec{\rho}(\vc^0) & := \tran{[\rho_1(\vc^0), \rho_2(\vc^0), \ldots,
\rho_n(\vc^0)]} = \tran{[\lambda_1(\vc^0), \lambda_2(\vc^0), \ldots,
\lambda_n(\vc^0)]}. \label{vec:rhoc0}
\end{align}
For $k = 0, 1, 2, \ldots$ until convergence do:
\newcounter{newlist}
\begin{list}
{step \arabic{newlist}.}
{\usecounter{newlist}
\setlength{\itemsep}{0em}
\setlength{\leftmargin}{1em}
\setlength{\rightmargin}{1em}}
\item
Form $[J_k]_{ij} = \tran{\vp_i(\vc^k)} A_j \vp_i(\vc^k)$ for $1 \leq
i, j \leq n$.
\item
Solve $\Delta\overline{\vc}^k$ inexactly from the approximate
Jacobian equation:
\begin{equation}
\label{eq:appr_Jaco_eq} J_k \Delta\overline{\vc}^k + \vrho(\vc^k)-
\vec{\lambda}^* = \vec{0}
\end{equation}
such that
\begin{equation}
\label{cond:res_cond} \|J_k \Delta\overline{\vc}^k + \vrho(\vc^k)-
\vec{\lambda}^*\| \leq \overline{\eta}_k \|\vrho(\vc^k) -
\vec{\lambda}^*\|,
\end{equation}
where $\overline{\eta}_0 \in (0,1)$ and
\begin{equation}
\label{cons:oetak} \overline{\eta}_k :=
\min\left\{\frac{\|\vrho(\vc^k) -
\vec{\lambda}^*\|^{\beta}}{\|\vec{\lambda}^*\|^\beta},
\frac{\|\vrho(\vc^k) - \vec{\lambda}^*\|^{\beta}}{\|\vrho(\vc^{k -
1}) - \vec{\lambda}^*\|^{\beta}}, \eta_{\text{max}}\right\}, \ \ \
\beta \in (1, 2], \ k = 1, 2, \ldots.
\end{equation}
\item
Set $\vdc^k = \Delta\overline{\vc}^k$ and $\eta_k =
\overline{\eta}_k$.
\item
Form the skew-symmetric matrix $\overline{Y}_k$ by (\ref{comp:Yij})
with $\vc^{\text{new}} = \vc^k + \vdc^k$.
\item
Compute matrix $\overline{P}_k := [\vp_1(\vc^k + \vdc^k),
 \ldots, \vp_n(\vc^k + \vdc^k)]$ by solving
(\ref{eq:solve_Pnew}).
\item
Compute $\vrho(\vc^k + \vdc^k) = \tran{(\rho_1(\vc^k + \vdc^k),
 \ldots, \rho_n(\vc^k + \vdc^k))}$ by
(\ref{comp:rhoinew}).
\item
While $\|\vrho(\vc^k + \vdc^k) - \vec{\lambda}^*\|
> (1 - \xi(1 - \eta_k)) \|\vrho(\vc^k) - \vec{\lambda}^*\|$ do:
$$
\text{choose\ } \theta \in [\theta_{\min}, \theta_{\max}], \text{\
then update } \vdc^k \leftarrow \theta \vdc^k \text{ and } \eta_k
\leftarrow 1 - \theta(1 - \eta_k).
$$

\item
Set $\vc^{k + 1} = \vc^k + \vdc^k$. As steps 4-6, compute,
respectively, the new approximations $P_{k + 1}:= [\vp_1(\vc^{k +
1}), \ldots, \vp_n(\vc^{k + 1})]$ and $\vrho(\vc^{k + 1}) :=
\tran{(\rho_1(\vc^{k + 1}), \ldots, \rho_n(\vc^{k + 1}))}$.
\end{list}
\end{algorithm}

\section{Convergence Analysis}

In this section, we analyze the global behavior of Algorithm
\ref{al:GBIEP_Cayley}.
We will show that if the given eigenvalues are distinct and if there
exists an accumulation point $\vc^*$ of $\{\vc^k\}$ such that the
Jacobian matrix $J(\vc^*)$ is invertible, then the iterations are
guaranteed to remain near $\vc^*$ and $\vrho(\vc^*) -
\vec{\lambda}^* = \vec{0}, \vc^k \to \vc^*$ as $k \to \infty$.
Furthermore, for $k$ sufficiently large, we have the equality
$\vc^{k+1} = \vc^k + \Delta\overline{\vc}^k$. Thus, we obtain that
the ultimate rate of convergence is $\beta$ which depends on the
choices of the $\overline{\eta}_k$ given in (\ref{cons:oetak}) .

It is worth noting that, if $\{\vc^k\}$ has no accumulation point,
or $\{\vc^k\}$ has one or more accumulation points and the Jacobian
matrix is singular at each of them, or the vector
$\Delta\overline{\vc}^k$ computed by solving the Jacobian equation
(\ref{eq:appr_Jaco_eq}) is such that $\Delta\overline{\vc}^k =
\vec{0}$, then our algorithm fails.

It is clear that, if $\vrho(\vc^k) - \vec{\lambda}^* \to 0$ as $k
\to \infty$ and $\vc^*$ is an accumulation point of $\{\vc^k\}$,
then $\rho(\vc^*) - \vec{\lambda}^* = 0$. Let $\{P_k\}$ be generated
by Algorithm \ref{al:GBIEP_Cayley} (see step 7) and define $E_k :=
P_k - Q^*$ for each $k = 0,1,\ldots$. The following lemma is taken
from \cite[Lamma 2]{BaiMoriniXu2007}.

\begin{lemma}[\cite{BaiMoriniXu2007}]
\label{lem:A(c)_Lip} For any $\vc, \overline{\vc} \in \RS^n$, we
have
$$
\|A(\vc) - A(\overline{\vc})\| \leq L \|\vc - \overline{\vc}\|,
$$
where $L := \left(\sum_{i = 1}^n \|A_i\|^2\right)^{\frac{1}{2}}$.
\end{lemma}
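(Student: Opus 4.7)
The plan is to exploit the affine structure of $A(\vc)$: since $A(\vc) = A_0 + \sum_{i=1}^n c_i A_i$, the constant term $A_0$ cancels in the difference $A(\vc) - A(\overline{\vc})$, leaving a linear combination of the fixed matrices $A_i$ with coefficients $c_i - \overline{c}_i$.

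First I would write
\[
A(\vc) - A(\overline{\vc}) = \sum_{i=1}^n (c_i - \overline{c}_i) A_i,
\]
and then apply the triangle inequality together with absolute homogeneity of the induced $2$-norm to obtain
\[
\|A(\vc) - A(\overline{\vc})\| \leq \sum_{i=1}^n |c_i - \overline{c}_i|\,\|A_i\|.
\]
The second step is to view the right-hand side as the Euclidean inner product of the vectors $(|c_i - \overline{c}_i|)_{i=1}^n$ and $(\|A_i\|)_{i=1}^n$ in $\RS^n$, and apply the Cauchy--Schwarz inequality to get
\[
\sum_{i=1}^n |c_i - \overline{c}_i|\,\|A_i\| \leq \left(\sum_{i=1}^n |c_i - \overline{c}_i|^2\right)^{1/2}\left(\sum_{i=1}^n \|A_i\|^2\right)^{1/2} = \|\vc - \overline{\vc}\|\,L,
\]
which yields the desired Lipschitz bound.

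There is essentially no obstacle here: the affine form of $A(\vc)$ reduces the claim to an elementary Cauchy--Schwarz estimate, and no spectral information about $A(\vc)$ is needed. The only thing to be slightly careful about is noting that the $A_i$ are fixed (independent of $\vc$), so the constant $L$ depends only on the data of the IEP.
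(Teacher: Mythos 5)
Your proof is correct and is exactly the standard argument: the paper itself gives no proof but imports the lemma from \cite{BaiMoriniXu2007}, where the bound is obtained in the same way, by writing $A(\vc)-A(\overline{\vc})=\sum_{i=1}^n(c_i-\overline{c}_i)A_i$ and combining the triangle inequality with Cauchy--Schwarz. Nothing is missing.
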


Based on Lemma \ref{lem:A(c)_Lip}, the following lemma is a
straightforward application of \cite[Lemma 4]{BaiChanMorini2004}.

\begin{lemma}
\label{lem:est_norm_rho(ck)-lambda*} Assume that $\vc^*$ is an
accumulation point of $\{\vc^k\}$. Let the given eigenvalues
$\{\lambda_i^*\}_{i = 1}^n$ be distinct. Then
$$
\|\vrho(\vc^k) - \vec{\lambda}^*\| \leq \sqrt{n}L \|\vc^k - \vc^*\|
+ \mu_2 \|E_k\|,
$$
for any $k \in \NS$, where $\mu_2 := 2 \sqrt{n} \cdot \|A(\vc^*)\|$.
\end{lemma}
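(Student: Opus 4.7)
The plan is to expand $\rho_i(\vc^k) - \lambda_i^*$ componentwise, peel off an $A(\vc^k)-A(\vc^*)$ piece to invoke Lemma \ref{lem:A(c)_Lip}, and then handle the remaining piece by exploiting that both $\vp_i(\vc^k)$ and the $i$-th column $\vq_i(\vc^*)$ of $Q_*$ are unit vectors whose difference is the $i$-th column of $E_k$.

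Concretely, I would first recall from (\ref{eq:Q*A(c*)Q*}) that $\lambda_i^{*} = \tran{\vq_i(\vc^*)}A(\vc^*)\vq_i(\vc^*)$, while by (\ref{comp:rhoinew}) $\rho_i(\vc^k)=\tran{\vp_i(\vc^k)}A(\vc^k)\vp_i(\vc^k)$. Writing $\vp_i:=\vp_i(\vc^k)$ and $\vq_i:=\vq_i(\vc^*)$ for brevity, I would then split
\begin{equation*}
\rho_i(\vc^k)-\lambda_i^{*}
=\tran{\vp_i}\bigl[A(\vc^k)-A(\vc^*)\bigr]\vp_i
+\bigl(\tran{\vp_i}A(\vc^*)\vp_i-\tran{\vq_i}A(\vc^*)\vq_i\bigr),
\end{equation*}
and further decompose the second parenthesis as $\tran{(\vp_i-\vq_i)}A(\vc^*)\vp_i+\tran{\vq_i}A(\vc^*)(\vp_i-\vq_i)$.

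Since $\|\vp_i\|=\|\vq_i\|=1$, Lemma \ref{lem:A(c)_Lip} bounds the first term by $L\|\vc^k-\vc^*\|$, and the submultiplicativity of the operator norm bounds the remaining two terms by $\|A(\vc^*)\|\,\|\vp_i-\vq_i\|$ each. Using the key observation $\vp_i-\vq_i=E_k\vec{e}_i$, I get $\|\vp_i-\vq_i\|\leq\|E_k\|$, so
\begin{equation*}
|\rho_i(\vc^k)-\lambda_i^{*}|\leq L\|\vc^k-\vc^*\|+2\|A(\vc^*)\|\,\|E_k\|
\end{equation*}
for each $i=1,\ldots,n$.

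Finally, squaring and summing over $i$ yields $\|\vrho(\vc^k)-\vec{\lambda}^*\|^2\leq n\bigl(L\|\vc^k-\vc^*\|+2\|A(\vc^*)\|\,\|E_k\|\bigr)^2$, and taking square roots gives exactly the claimed bound with $\mu_2=2\sqrt{n}\,\|A(\vc^*)\|$. No step is really an obstacle here; the only subtlety worth naming is recognizing that each column-wise difference $\vp_i-\vq_i$ is controlled by the operator norm $\|E_k\|$ rather than by $\|E_k\|_F$, which is what lets the factor $\sqrt{n}$ (rather than $n$) appear in $\mu_2$ after aggregating across the $n$ components.
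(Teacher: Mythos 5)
Your proof is correct and is exactly the argument the paper delegates to \cite[Lemma 4]{BaiChanMorini2004}: split $\rho_i(\vc^k)-\lambda_i^*$ into a Lipschitz piece controlled by Lemma \ref{lem:A(c)_Lip} and an eigenvector-perturbation piece controlled columnwise by $\|E_k\|$, then aggregate over the $n$ components to pick up the factor $\sqrt{n}$. The constants come out exactly as stated, so nothing further is needed.
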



As shown in \cite[Thoerem 2.3]{Sun1985}, in the case when the given
eigenvalues $\{\lambda_i^*\}_{i = 1}^n$ are distinct, the
eigenvalues of $A(\vc)$ are distinct too for any point $\vc$ in some
neighborhood of $\vc^*$. It follows that the function $f(\cdot)$
defined in (\ref{fun:f(c)}) is analytic in the same neighborhood.
However, if $\vc$ is not near the solution $\vc^*$, the analyticity
of the function $f(\cdot)$ cannot be guaranteed.

For any symmetric matrix $X \in \RS^{n \times n}$, set $\Lambda(X)
:= \diag(\lambda_1(X), \ldots, \lambda_n(X))$, where $\lambda_i(X),
i = 1, 2, \ldots, n,$ are the eigenvalues of $X$. As proved by Sun
and Sun \cite[Theorem 4.7]{SunSun2003}, $\Lambda(\cdot)$ is a
strongly semismooth function. Based on this result, for any $\vc \in
\RS^n$ and $\varepsilon > 0$, there exists $\delta > 0$ sufficiently
small such that
\begin{equation*}
\|\vrho(\overline{\vc}) - \vrho(\vc) - J(\vc) (\overline{\vc} -
\vc)\| \leq \varepsilon \|\overline{\vc} - \vc\|^2, \ \ \ \text{for
any } \overline{\vc} \in \ball(\vc, \delta).
\end{equation*}

The following lemma says that, the backtracking loop in step 7 of
Algorithm \ref{al:GBIEP_Cayley} terminates after a finite number of
steps.

\begin{lemma}
\label{lem:while_terminate_1} There exists $\eta_k^{\min} \in [0,1)$
such that, for any $\eta_k \in [\eta_k^{\min}, 1)$, there is
$\vdc^k$ satisfying
$$
\|\vrho(\vc^k + \vdc^k) - \vec{\lambda}^*\| \leq [1 - \xi(1 -
\eta_k)] \|\vrho(\vc^k) - \vec{\lambda}^*\|.
$$
\end{lemma}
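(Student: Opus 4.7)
The plan is to combine the residual bound (\ref{cond:res_cond}), which is preserved by the backtracking updates, with the strong semismoothness estimate recalled just before the lemma, and then to exploit the fact that repeated scaling by $\theta$ drives the quadratic remainder to zero faster than the linear slack $(1-\eta_k)$.

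First I would verify an invariant: at every pass through the backtracking loop,
\[
\|J_k \vdc^k + \vrho(\vc^k) - \vec{\lambda}^*\| \leq \eta_k \|\vrho(\vc^k) - \vec{\lambda}^*\|.
\]
This holds initially by (\ref{cond:res_cond}), and is preserved under the update $\vdc^k \mapsto \theta\vdc^k$, $\eta_k \mapsto 1 - \theta(1-\eta_k)$, because of the convex decomposition
\[
J_k(\theta\vdc^k) + \vrho(\vc^k) - \vec{\lambda}^* = \theta\bigl[J_k\vdc^k + \vrho(\vc^k) - \vec{\lambda}^*\bigr] + (1-\theta)\bigl[\vrho(\vc^k) - \vec{\lambda}^*\bigr],
\]
to which the triangle inequality immediately applies.

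Next, I would invoke the semismoothness estimate $\|\vrho(\vc^k + \vdc^k) - \vrho(\vc^k) - J(\vc^k)\vdc^k\| \leq \varepsilon\|\vdc^k\|^2$ displayed just before the lemma, which holds once $\vdc^k$ is small enough. Splitting
\[
\vrho(\vc^k + \vdc^k) - \vec{\lambda}^* = \bigl[J_k\vdc^k + \vrho(\vc^k) - \vec{\lambda}^*\bigr] + [J(\vc^k) - J_k]\vdc^k + R(\vdc^k),
\]
with $\|R(\vdc^k)\| \leq \varepsilon\|\vdc^k\|^2$, and using the invariant, I obtain
\[
\|\vrho(\vc^k + \vdc^k) - \vec{\lambda}^*\| \leq \eta_k\|\vrho(\vc^k)-\vec{\lambda}^*\| + \bigl(\|J(\vc^k) - J_k\| + \varepsilon\|\vdc^k\|\bigr)\|\vdc^k\|.
\]
It therefore suffices to force the second summand below $(1-\xi)(1-\eta_k)\|\vrho(\vc^k)-\vec{\lambda}^*\|$.

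Finally, after $m$ passes through the backtracking loop with factors $\theta_j \in [\theta_{\min},\theta_{\max}]$, both $\|\vdc^k\|$ and $1-\eta_k$ are multiplied by the same product $\prod_{j=1}^{m}\theta_j$, so the quotient $\|\vdc^k\|/(1-\eta_k)$ stays bounded by its initial value $\|\Delta\overline{\vc}^k\|/(1-\overline{\eta}_k)$ while $\|\vdc^k\|$ itself decays geometrically. Dividing the required remainder inequality through by $(1-\eta_k)\|\vrho(\vc^k)-\vec{\lambda}^*\|$, what remains to be shown is that $\bigl(\|J(\vc^k) - J_k\| + \varepsilon\|\vdc^k\|\bigr)\|\vdc^k\|/(1-\eta_k)$ can be made smaller than $(1-\xi)\|\vrho(\vc^k)-\vec{\lambda}^*\|$, which defines the threshold $\eta_k^{\min}\in[0,1)$. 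The step I expect to require the most care is the linear-in-$\|\vdc^k\|$ discrepancy $\|J(\vc^k)-J_k\|\|\vdc^k\|$: unlike the second-order remainder, it cannot be killed by backtracking alone, and its smallness has to be traced back to the accuracy of $P_k$ as an approximate eigenvector matrix, i.e.\ to $\|E_k\|$ being small enough (which is where the accumulation-point hypothesis and Lemma~\ref{lem:est_norm_rho(ck)-lambda*} will have to enter).
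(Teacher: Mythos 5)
Your core argument coincides with the paper's: the residual invariant $\|J_k\vdc^k + \vrho(\vc^k)-\vec{\lambda}^*\| \le \eta_k\|\vrho(\vc^k)-\vec{\lambda}^*\|$, preserved under the update $\vdc^k\mapsto\theta\vdc^k$, $\eta_k\mapsto 1-\theta(1-\eta_k)$ by exactly the convex decomposition you write down, followed by splitting $\vrho(\vc^k+\vdc^k)-\vec{\lambda}^*$ into the linearized residual plus a semismoothness remainder. The paper packages this slightly differently: it takes $\vdc^k := \frac{1-\eta_k}{1-\overline{\eta}_k}\Delta\overline{\vc}^k$ directly, uses a first-order form $\varepsilon\|\vdc\|$ of the remainder bound, and chooses $\varepsilon = \frac{(1-\xi)(1-\overline{\eta}_k)\|\vrho(\vc^k)-\vec{\lambda}^*\|}{\|\Delta\overline{\vc}^k\|}$ so that $\varepsilon\|\vdc^k\|$ equals the slack $(1-\xi)(1-\eta_k)\|\vrho(\vc^k)-\vec{\lambda}^*\|$ exactly; your observation that $\|\vdc^k\|/(1-\eta_k)$ is invariant while $\|\vdc^k\|$ decays does the same job for the quadratic form of the remainder. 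Up to that point the two arguments are the same.

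The genuine gap is the one you flag yourself at the end: the term $\|J(\vc^k)-J_k\|\,\|\vdc^k\|$. Because $\|\vdc^k\|/(1-\eta_k)$ is constant along the backtracking, this term divided by $(1-\eta_k)\|\vrho(\vc^k)-\vec{\lambda}^*\|$ is a fixed positive number for the given $k$; no amount of backtracking reduces it, and the lemma as stated carries no hypothesis (accumulation point, $\|E_k\|$ small) under which $\|J(\vc^k)-J_k\|$ could be made small. So your proof cannot close as written. You should be aware, however, that the paper does not close this gap either: its proof states the semismoothness inequality (\ref{leq:stronglysemismooth}) directly with $J_k$ in place of the true Jacobian $J(\vc^k)$ (and with $\vrho$, a Rayleigh-quotient vector built from the approximate eigenvector matrix $P_k$, in place of the exact eigenvalue vector to which the strong-semismoothness theorem actually applies, and with a linear rather than quadratic right-hand side). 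Your decomposition makes explicit a discrepancy that the paper absorbs without justification. To complete a proof along your lines you would need either to assume $P_k$ is an exact eigenvector matrix of $A(\vc^k)$, so that $J_k=J(\vc^k)$, or to import the $\|E_k\|$-smallness hypotheses from Lemma~\ref{lem:inv_Jk} and its neighbours --- neither of which the statement of Lemma~\ref{lem:while_terminate_1} provides.
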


\begin{proof}
By using the strong semismoothness of all eigenvalues of a real
symmetric matrix \cite{SunSun2003}, for any given $\varepsilon > 0$,
there exists $\delta > 0$ sufficiently small such that
\begin{equation}
\label{leq:stronglysemismooth} \|\vrho(\vc^k + \vdc) - \vrho(\vc^k)
- J_k \vdc\| \leq \varepsilon \|\vdc\|
\end{equation}
whenever $\|\vdc\| \leq \delta$. Choose
\begin{equation}
\label{cons:varepsilon} \varepsilon = \frac{(1 - \xi)(1 -
\overline{\eta}_k)\|\vrho(\vc^k) -
\vec{\lambda}^*\|}{\|\Delta\overline{\vc}^k\|}
\end{equation}
and set
\begin{equation}
\label{cons:etakmin} \eta_k^{\min} := \max\left\{\overline{\eta}_k,
1 - \frac{(1 -
\overline{\eta}_k)\delta}{\|\Delta\overline{\vc}^k\|}\right\}.
\end{equation}
For any $\eta_k \in [\eta_k^{\min}, 1)$, let $\vdc^k := \frac{1 -
\eta_k}{1 - \overline{\eta}_k} \Delta\overline{\vc}^k$. Then, by the
definition of $\eta_k^{\min}$ given in (\ref{cons:etakmin}), we have
$$
\|\vdc^k\| = \frac{1 - \eta_k}{1 - \overline{\eta}_k}
\|\Delta\overline{\vc}^k\| \leq \frac{1 - \eta_k^{\min}}{1 -
\overline{\eta}_k} \|\Delta\overline{\vc}^k\| \leq \frac{(1 -
\overline{\eta}_k)\delta}{\|\Delta\overline{\vc}^k\|} \cdot
\frac{1}{1 - \overline{\eta}_k} \|\Delta\overline{\vc}^k\| = \delta.
$$
On the other hand, by (\ref{cond:res_cond}), one gets that
\begin{align*}
\|\vrho(\vc^k) - \vec{\lambda}^* + J_k \vdc^k\| & \leq \frac{\eta_k
- \overline{\eta}_k}{1 - \overline{\eta}_k} \|\vrho(\vc^k) -
\vec{\lambda}^*\| + \frac{1 - \eta_k}{1 - \overline{\eta}_k}
\|\vrho(\vc^k) - \vec{\lambda}^* + J_k \Delta\overline{\vc}^k\| \\
& \leq \frac{\eta_k - \overline{\eta}_k}{1 - \overline{\eta}_k}
\|\vrho(\vc^k) - \vec{\lambda}^*\| + \frac{1 - \eta_k}{1 -
\overline{\eta}_k} \cdot \overline{\eta}_k \|\vrho(\vc^k) - \vec{\lambda}^*\| \\
& = \eta_k \|\vrho(\vc^k) - \vec{\lambda}^*\|.
\end{align*}
This together with (\ref{leq:stronglysemismooth}) and
(\ref{cons:varepsilon}) yields that
\begin{align*}
\|\vrho(\vc^k + \vdc^k) - \vec{\lambda}^*\| & \leq \|\vrho(\vc^k +
\vdc^k) - \vrho(\vc^k) - J_k \vdc^k\| + \|\vrho(\vc^k) -
\vec{\lambda}^* + J_k \vdc^k\| \\
& \leq \varepsilon \|\vdc^k\| + \eta_k \|\vrho(\vc^k) -
\vec{\lambda}^*\| \\
& = \varepsilon \frac{1 - \eta_k}{1 - \overline{\eta}_k}
\|\Delta\overline{\vc}^k\| + \eta_k \|\vrho(\vc^k) -
\vec{\lambda}^*\| \\
& = (1 - \xi)(1 - \eta_k) \|\vrho(\vc^k) - \vec{\lambda}^*\| +
\eta_k \|\vrho(\vc^k) - \vec{\lambda}^*\| \\
& = [1 - \xi(1 - \eta_k)] \|\vrho(\vc^k) - \vec{\lambda}^*\|,
\end{align*}
and the proof is completed.
\end{proof}

Next, we give sufficient conditions for Algorithm
\ref{al:GBIEP_Cayley} not to break down in the backtracking loop in
step 7.

\begin{lemma}
\label{lem:while_terminate_2}  If $\vrho(\vc^k) - \vec{\lambda}^*
\neq 0$ and there exists $\Gamma > 0$ for which
\begin{equation}
\label{cond:norm_dck_1} \|\vdc^k\| \leq \Gamma (1 - \eta_k)
\|\vrho(\vc^k) - \vec{\lambda}^*\|,
\end{equation}
then, the backtracking loop terminates.
\end{lemma}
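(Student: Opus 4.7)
The plan is to combine the algebraic invariants preserved by the backtracking update with the strong semismoothness of $\vrho(\cdot)$, so that after finitely many passes through the loop the Rayleigh-quotient residual at $\vc^k + \vdc^k$ is close enough to $\vrho(\vc^k) + J_k \vdc^k$ to trigger the acceptance criterion.

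First I would verify two invariants preserved by every execution of the body of the loop. The body replaces $\vdc^k$ by $\theta \vdc^k$ and $\eta_k$ by $1 - \theta(1 - \eta_k)$ for some $\theta \in [\theta_{\min}, \theta_{\max}]$. The residual bound $\|J_k \vdc^k + \vrho(\vc^k) - \vec{\lambda}^*\| \leq \eta_k \|\vrho(\vc^k) - \vec{\lambda}^*\|$, which holds initially by (\ref{cond:res_cond}), is preserved through the convex-combination identity
$$J_k(\theta \vdc^k) + \vrho(\vc^k) - \vec{\lambda}^* = \theta \bigl[J_k \vdc^k + \vrho(\vc^k) - \vec{\lambda}^*\bigr] + (1 - \theta)\bigl[\vrho(\vc^k) - \vec{\lambda}^*\bigr].$$
The hypothesis (\ref{cond:norm_dck_1}) is preserved as well, since $\|\vdc^k\|$ and $1 - \eta_k$ scale by the same $\theta$ while $\vrho(\vc^k) - \vec{\lambda}^*$ is untouched throughout the loop. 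The assumption $\vrho(\vc^k) - \vec{\lambda}^* \neq \vec{0}$, together with $\overline{\eta}_k < 1$, also forces $\Delta\overline{\vc}^k \neq \vec{0}$ in (\ref{cond:res_cond}), so $\|\vdc^k\|$ stays strictly positive.

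Next I would exploit the contraction $\|\vdc^k\| \leq \theta_{\max}^j \|\Delta\overline{\vc}^k\| \to 0$ after $j$ passes through the body. Setting $\varepsilon := (1 - \xi)/\Gamma$ and invoking the strong semismoothness estimate (\ref{leq:stronglysemismooth}) with this $\varepsilon$ produces a threshold $\delta > 0$ such that, once $j$ is large enough that $\|\vdc^k\| \leq \delta$, the triangle inequality together with the two preserved invariants yields
\begin{align*}
\|\vrho(\vc^k + \vdc^k) - \vec{\lambda}^*\| & \leq \|\vrho(\vc^k) - \vec{\lambda}^* + J_k \vdc^k\| + \varepsilon \|\vdc^k\| \\
& \leq \eta_k \|\vrho(\vc^k) - \vec{\lambda}^*\| + \varepsilon \Gamma (1 - \eta_k) \|\vrho(\vc^k) - \vec{\lambda}^*\| \\
& = [1 - \xi(1 - \eta_k)] \|\vrho(\vc^k) - \vec{\lambda}^*\|,
\end{align*}
which is exactly the exit condition of the backtracking loop.

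The main obstacle, and the reason (\ref{cond:norm_dck_1}) is imposed as an explicit hypothesis rather than being derived on the fly, is the coupling it enforces between $\|\vdc^k\|$ and $1 - \eta_k$. Without such a bound, $\eta_k$ could approach $1$ faster than $\|\vdc^k\|$ contracts, and the semismoothness error $\varepsilon \|\vdc^k\|$ would fail to be dominated by $\xi(1 - \eta_k) \|\vrho(\vc^k) - \vec{\lambda}^*\|$. The hypothesis (\ref{cond:norm_dck_1}) fixes a ratio that lets a single $\varepsilon$, and hence a single $\delta$, serve every iteration of the loop, and it is precisely this uniformity that converts the local linearization estimate into a finite-termination statement.
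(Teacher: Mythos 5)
Your proposal is correct and follows essentially the same route as the paper's proof: choose $\varepsilon = (1-\xi)/\Gamma$ in the strong semismoothness estimate (\ref{leq:stronglysemismooth}) to obtain a threshold $\delta$, then combine the residual bound with hypothesis (\ref{cond:norm_dck_1}) via the triangle inequality to reach the acceptance criterion $[1 - \xi(1-\eta_k)]\|\vrho(\vc^k)-\vec{\lambda}^*\|$. Your explicit verification that the loop body preserves both the residual invariant and (\ref{cond:norm_dck_1}), and that $\|\vdc^k\|$ contracts geometrically below $\delta$, supplies details the paper leaves implicit but does not change the argument.
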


\begin{proof}
For constant $\Gamma$ in (\ref{cond:norm_dck_1}) and the given $\xi
\in (0,1)$, choosing $\varepsilon = \frac{1 - \xi}{\Gamma}$, there
exists $\delta > 0$ sufficiently small such that
(\ref{leq:stronglysemismooth}) holds whenever $\|\vdc\| \leq
\delta$. We choose $\eta_k \in [\overline{\eta}_k, 1]$ satisfying
\begin{equation}
\label{leq:1-etak} 1 - \eta_k < \frac{\delta}{\Gamma \|\vrho(\vc^k)
- \vec{\lambda}^*\|}.
\end{equation}
It follows that
$$
\|\vdc^k\| \leq \Gamma (1 - \eta_k) \|\vrho(\vc^k) -
\vec{\lambda}^*\| < \delta,
$$
which gives $\vc^k + \vdc^k \in \ball(\vc^k, \delta)$. Thus, we have
$$
\|\vrho(\vc + \vdc^k) - \vrho(\vc^k) - J_k\vdc^k\| \leq \frac{1 -
\xi}{\Gamma} \|\vdc^k\|.
$$
This together with (\ref{cond:res_cond}) gives that
\begin{align*}
\|\vrho(\vc^k + \vdc^k) - \vec{\lambda}^*\| & \leq \|\vrho(\vc^k) -
\vec{\lambda}^* + J_k \vdc^k\| + \|\vrho(\vc^k + \vdc^k) -
\vrho(\vc^k) - J_k\vdc^k\| \\
& \leq \eta_k \|\vrho(\vc^k) - \vec{\lambda}^*\| + (1 - \xi)(1 -
\eta_k) \|\vrho(\vc^k) - \vec{\lambda}^*\| \\
& = (1 - \xi(1 - \eta_k))\|\vrho(\vc^k) - \vec{\lambda}^*\|.
\end{align*}
It follows that the backtracking loop terminates. This completes the
proof.
\end{proof}

The next lemma gives condition under which (\ref{cond:norm_dck_1})
is satisfied.

\begin{lemma}
\label{lem:exist_Gamma}  Assume that the Jacobian matrix $J_k$ is
invertible and set $M_k := \|\inv{J}_k\|$. Then there exists
$\Gamma$ such that $(\ref{cond:norm_dck_1})$ holds.
\end{lemma}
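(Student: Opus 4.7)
The plan is to track the algorithmic relationship between $\vdc^k$ and $\Delta\overline{\vc}^k$ imposed by the backtracking loop, and then to use the inexact-Newton residual condition (\ref{cond:res_cond}) together with the invertibility hypothesis to convert a bound on $\|\Delta\overline{\vc}^k\|$ into one on $\|\vdc^k\|$.

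First I would observe the following invariant of step 7 of Algorithm \ref{al:GBIEP_Cayley}: every time the update $\vdc^k \leftarrow \theta\vdc^k$ and $\eta_k \leftarrow 1 - \theta(1-\eta_k)$ is executed, the quantity $\vdc^k/(1-\eta_k)$ is unchanged. Since $\vdc^k$ and $\eta_k$ are initialized to $\Delta\overline{\vc}^k$ and $\overline{\eta}_k$ in step~3, this means that at any point inside the backtracking loop there is some scalar $t\in(0,1]$ with
$$
\vdc^k = t\,\Delta\overline{\vc}^k,\qquad 1-\eta_k = t\,(1-\overline{\eta}_k).
$$

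Next, using (\ref{eq:appr_Jaco_eq})–(\ref{cond:res_cond}), I would write $\Delta\overline{\vc}^k = J_k^{-1}\bigl(-(\vrho(\vc^k)-\vec{\lambda}^*) + r_k\bigr)$ with $\|r_k\|\le\overline{\eta}_k\|\vrho(\vc^k)-\vec{\lambda}^*\|$, and then take norms to obtain
$$
\|\Delta\overline{\vc}^k\|\le M_k(1+\overline{\eta}_k)\,\|\vrho(\vc^k)-\vec{\lambda}^*\|.
$$
Combining this with the invariant above gives
$$
\|\vdc^k\|=t\,\|\Delta\overline{\vc}^k\|\le \frac{M_k(1+\overline{\eta}_k)}{1-\overline{\eta}_k}\,t(1-\overline{\eta}_k)\,\|\vrho(\vc^k)-\vec{\lambda}^*\|=\Gamma\,(1-\eta_k)\,\|\vrho(\vc^k)-\vec{\lambda}^*\|,
$$
where $\Gamma := M_k(1+\overline{\eta}_k)/(1-\overline{\eta}_k)$. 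Since $\overline{\eta}_k\le\eta_{\max}<1$ by (\ref{cons:oetak}), $\Gamma$ is finite, and one can even replace it by the uniform bound $M_k(1+\eta_{\max})/(1-\eta_{\max})$.

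I do not expect a genuine obstacle here; the only subtle point is verifying the invariant $\vdc^k/(1-\eta_k)=\Delta\overline{\vc}^k/(1-\overline{\eta}_k)$ across the backtracking updates, which is a direct computation: if the invariant holds before an update, then after the simultaneous replacement by $(\theta\vdc^k,1-\theta(1-\eta_k))$ the numerator is multiplied by $\theta$ and so is the denominator, preserving the ratio. Everything else is the standard norm estimate coming from inverting $J_k$ and applying the residual bound.
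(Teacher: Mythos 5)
Your proof is correct and follows the same basic route as the paper's: bound $\|\vdc^k\|\leq\|\inv{J}_k\|\,\|J_k\vdc^k\|$ and control $\|J_k\vdc^k\|$ via the triangle inequality and the residual condition (\ref{cond:res_cond}). The one place you diverge is worth noting: the paper works directly with the current pair $(\vdc^k,\eta_k)$, using that the residual condition $\|J_k\vdc^k+\vrho(\vc^k)-\vec{\lambda}^*\|\leq\eta_k\|\vrho(\vc^k)-\vec{\lambda}^*\|$ is preserved under the backtracking update, and then bounds $\frac{1+\eta_k}{1-\eta_k}$ by $\frac{1+\eta_{\max}}{1-\eta_{\max}}$ --- a step that is only literally valid when $\eta_k=\overline{\eta}_k\leq\eta_{\max}$, since $\eta_k$ increases toward $1$ during backtracking. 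You instead establish the estimate once at the initial pair $(\Delta\overline{\vc}^k,\overline{\eta}_k)$, where $\overline{\eta}_k\leq\eta_{\max}$ holds by (\ref{cons:oetak}), and then propagate it through the loop via the invariance of $\vdc^k/(1-\eta_k)$. This yields the same constant $\Gamma=M_k\frac{1+\eta_{\max}}{1-\eta_{\max}}$ but makes airtight the claim that a single $\Gamma$ works at every stage of the backtracking loop, which is exactly what Lemma \ref{lem:while_terminate_2} and Corollary \ref{cor:1-etak} need. (Minor caveat for $k=0$: the algorithm only assumes $\overline{\eta}_0\in(0,1)$, not $\overline{\eta}_0\leq\eta_{\max}$, so for that single index one should keep $\Gamma$ in the form $M_k(1+\overline{\eta}_k)/(1-\overline{\eta}_k)$; this affects the paper's version equally.)
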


\begin{proof}
By using condition (\ref{cond:res_cond}), one has that
\begin{align*}
\|\vdc^k\| & \leq \|\inv{J}_k\| \|J_k \vdc^k\| \\
& \leq M_k(\|\vrho(\vc^k) - \vec{\lambda}^*\| + \|\vrho(\vc^k) -
\vec{\lambda}^* + J_k \vdc^k\|) \\
& \leq M_k(1 + \eta_k)\|\vrho(\vc^k) - \vec{\lambda}^*\| \\
& = M_k \frac{1 + \eta_k}{1 - \eta_k} (1 - \eta_k) \|\vrho(\vc^k) -
\vec{\lambda}^*\| \\
& \leq M_k \frac{1 + \eta_{\max}}{1 - \eta_{\max}} (1 - \eta_k)
\|\vrho(\vc^k) - \vec{\lambda}^*\|.
\end{align*}
We finish the proof by taking $\Gamma := M_k \frac{1 +
\eta_{\max}}{1 - \eta_{\max}}$.
\end{proof}

Lemmas \ref{lem:while_terminate_2} and \ref{lem:exist_Gamma} yield
the result below.

\begin{corollary}
\label{cor:1-etak} Assume that $\vrho(\vc^k) - \vec{\lambda}^* \neq
\vec{0}$ and $J_k$ is invertible. Set $M_k := \|\inv{J}_k\|$ and
$\Gamma := M_k \frac{1 + \eta_{\max}}{1 - \eta_{\max}}$. Then, the
backtracking loop in step 6 of Algorithm $\ref{al:GBIEP_Cayley}$
terminates with
\begin{equation}
\label{geq:1-etak} 1 - \eta_k \geq \min\left\{1 - \overline{\eta}_k,
\frac{\delta \theta_{\textup{min}}}{\Gamma \|\vrho(\vc^k) -
\vec{\lambda}^*\|}\right\}
\end{equation}
for any $\delta > 0$ small enough such that
$(\ref{leq:stronglysemismooth})$ holds whenever $\|\vdc\| \leq
\delta$.
\end{corollary}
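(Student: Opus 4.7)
The plan is to combine Lemmas~\ref{lem:while_terminate_2} and~\ref{lem:exist_Gamma}, and then carefully quantify the size of the final shrink performed inside the backtracking loop. First, with $\Gamma := M_k (1 + \eta_{\max})/(1 - \eta_{\max})$ produced by Lemma~\ref{lem:exist_Gamma}, the condition (\ref{cond:norm_dck_1}) holds for the initial pair $(\Delta\overline{\vc}^k, \overline{\eta}_k)$. I would first observe that the simultaneous update in step~7 preserves (\ref{cond:norm_dck_1}): rescaling $\vdc^k \leftarrow \theta \vdc^k$ multiplies the left-hand side by $\theta$, while $1 - \eta_k \mapsto \theta(1-\eta_k)$ multiplies the right-hand side by the same $\theta$. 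So (\ref{cond:norm_dck_1}) persists throughout the loop and Lemma~\ref{lem:while_terminate_2} forces termination.

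Next I would fix a single $\delta > 0$ at the outset, small enough that (\ref{leq:stronglysemismooth}) holds for $\|\vdc\| \leq \delta$ with $\varepsilon = (1-\xi)/\Gamma$, mirroring the proof of Lemma~\ref{lem:while_terminate_2}. The key lever is the contrapositive of that lemma's acceptance criterion: if at some iterate inside the loop the acceptance test is violated, then necessarily $1 - \eta_k \geq \delta/(\Gamma \|\vrho(\vc^k) - \vec{\lambda}^*\|)$, for otherwise (\ref{leq:1-etak}) would hold and Lemma~\ref{lem:while_terminate_2} would already force acceptance.

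To derive the bound (\ref{geq:1-etak}) I would split into two cases. If the loop exits without performing any shrink, then $\eta_k = \overline{\eta}_k$ and the first term of the minimum is achieved. Otherwise, denote by $\eta_k^{\mathrm{prev}}$ the value of $\eta_k$ immediately before the final (successful) shrink; by the contrapositive above, $1 - \eta_k^{\mathrm{prev}} \geq \delta/(\Gamma \|\vrho(\vc^k) - \vec{\lambda}^*\|)$. The subsequent multiplication by some $\theta \in [\theta_{\min}, \theta_{\max}]$ then yields $1 - \eta_k \geq \theta_{\min}\, \delta/(\Gamma \|\vrho(\vc^k) - \vec{\lambda}^*\|)$, which is the second term of the minimum. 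The only delicate point, and the one I expect to require the most care in a full write-up, is precisely this bookkeeping: distinguishing the values of $\eta_k$ just before and just after the last shrink, and ensuring that the floor $\theta_{\min}$ is applied only to that single final rescaling rather than being compounded over the whole loop.
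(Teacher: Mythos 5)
Your proposal is correct and follows essentially the same route as the paper: a case split on whether the loop body ran, and for the penultimate value $\eta_k^-$ the contrapositive of the acceptance criterion (i.e., the failure of (\ref{leq:1-etak})) gives $1-\eta_k^- \geq \delta/(\Gamma\|\vrho(\vc^k)-\vec{\lambda}^*\|)$, after which a single multiplication by $\theta \geq \theta_{\min}$ yields (\ref{geq:1-etak}). Your explicit observation that the simultaneous update $\vdc^k \leftarrow \theta\vdc^k$, $1-\eta_k \leftarrow \theta(1-\eta_k)$ preserves (\ref{cond:norm_dck_1}) is a useful detail the paper leaves implicit.
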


\begin{proof}
Suppose that $\eta_k$ is the final value determined by the
while-loop. If $\eta_k = \overline{\eta}_k$, then (\ref{geq:1-etak})
is trivial. Assume that $\eta_k \neq \overline{\eta}_k$, that is,
the body of the while-loop has been executed at  least once.
Denoting the penultimate value by $\eta_k^-$, then it follows from
(\ref{leq:1-etak}) that $1 - \eta_k^- \geq \frac{\delta}{\Gamma
\|\vrho(\vc^k) - \vec{\lambda}^*\|}$. Thus, we have
$$
1 - \eta_k = \theta (1 - \eta_k^-) \geq \theta_{\textup{min}} (1 -
\eta_k^-) \geq \frac{\delta \theta_{\textup{min}}}{\Gamma
\|\vrho(\vc^k) - \vec{\lambda}^*\|}.
$$
This completes the proof.
\end{proof}

\begin{lemma}
\label{lem:convergence1} Assume that $\vc^*$ is an accumulation
point of $\{\vc^k\}$ such that there exists a constant $\Gamma$
independent of $k$ for which
\begin{equation}
\label{cond:norm_dck_2} \|\vdc^k\| \leq \Gamma (1 - \eta_k)
\|\vrho(\vc^k) - \vec{\lambda}^*\|,
\end{equation}
whenever $\vc^k$ is sufficiently near $\vc^*$  and $k$ is sufficient
large. Then $\vc^k \to \vc^*$ as $k \to \infty$.
\end{lemma}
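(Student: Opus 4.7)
The plan is to show that $\sum_k \|\vdc^k\|$ is finite past some index, so that $\{\vc^k\}$ is a Cauchy sequence and therefore converges; since $\vc^*$ is an accumulation point, its only possible limit is $\vc^*$. The driving estimate is the backtracking acceptance condition enforced in step 7 of Algorithm~\ref{al:GBIEP_Cayley}, which reads $\|\vrho(\vc^{k+1}) - \vec{\lambda}^*\| \leq (1-\xi(1-\eta_k))\|\vrho(\vc^k) - \vec{\lambda}^*\|$. Rearranging and telescoping this inequality yields
$$\xi\sum_{k=0}^{N}(1-\eta_k)\|\vrho(\vc^k) - \vec{\lambda}^*\| \leq \|\vrho(\vc^0) - \vec{\lambda}^*\| - \|\vrho(\vc^{N+1}) - \vec{\lambda}^*\| \leq \|\vrho(\vc^0) - \vec{\lambda}^*\|,$$
so that the nonnegative series $S := \sum_{k=0}^{\infty}(1-\eta_k)\|\vrho(\vc^k) - \vec{\lambda}^*\|$ converges and its tails vanish.

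To combine this summability with the hypothesis~(\ref{cond:norm_dck_2}), I run a bootstrap argument that keeps $\vc^k$ inside the neighborhood where the hypothesis is valid. Fix $r > 0$ small enough that (\ref{cond:norm_dck_2}) holds for every iterate lying in $\ball(\vc^*, r)$. Because $\vc^*$ is an accumulation point of $\{\vc^k\}$ and the tail of $S$ tends to zero, I can choose an index $k_0$ as large as I wish for which $\vc^{k_0}\in\ball(\vc^*, r)$ and
$$\|\vc^{k_0} - \vc^*\| + \Gamma\sum_{k=k_0}^{\infty}(1-\eta_k)\|\vrho(\vc^k) - \vec{\lambda}^*\| < r.$$
I then induct on $m \geq 0$ to prove $\vc^{k_0 + m} \in \ball(\vc^*, r)$: assuming the claim for $0,1,\ldots,m-1$, hypothesis~(\ref{cond:norm_dck_2}) applies at each of those indices, and the triangle inequality together with the bound on $\|\vdc^{k_0+j}\|$ gives $\|\vc^{k_0+m} - \vc^*\| \leq \|\vc^{k_0} - \vc^*\| + \sum_{j=0}^{m-1}\|\vdc^{k_0+j}\| \leq \|\vc^{k_0} - \vc^*\| + \Gamma\sum_{j=0}^{m-1}(1-\eta_{k_0+j})\|\vrho(\vc^{k_0+j}) - \vec{\lambda}^*\| < r$, closing the induction.

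Once the bootstrap is in place, (\ref{cond:norm_dck_2}) holds for every $k\geq k_0$, so $\sum_{k\geq k_0}\|\vdc^k\| \leq \Gamma\sum_{k\geq k_0}(1-\eta_k)\|\vrho(\vc^k) - \vec{\lambda}^*\| < \infty$. Hence $\{\vc^k\}_{k\geq k_0}$ is Cauchy and converges to some limit $\hat{\vc}$; since $\vc^*$ is an accumulation point of $\{\vc^k\}$, necessarily $\hat{\vc}=\vc^*$, which is the desired conclusion. The main obstacle is precisely this bootstrap: hypothesis~(\ref{cond:norm_dck_2}) is only a local statement, so the absolute summability of $\{\vdc^k\}$ and the confinement of the iterates to a neighborhood of $\vc^*$ must be proved in tandem. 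The crucial flexibility that makes it work is that $k_0$ can be chosen arbitrarily far along the convergent subsequence, allowing both $\|\vc^{k_0} - \vc^*\|$ and the relevant tail of $S$ to be made as small as needed to be absorbed into the $r$-ball.
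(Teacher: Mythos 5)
Your proof is correct, and it reaches the conclusion by a genuinely different route than the paper, even though both arguments ultimately rest on the same engine: telescoping the sufficient-decrease condition $\|\vrho(\vc^{k+1})-\vec{\lambda}^*\|\le(1-\xi(1-\eta_k))\|\vrho(\vc^k)-\vec{\lambda}^*\|$ to obtain summability of $(1-\eta_k)\|\vrho(\vc^k)-\vec{\lambda}^*\|$, and then invoking the hypothesis $\|\vdc^k\|\le\Gamma(1-\eta_k)\|\vrho(\vc^k)-\vec{\lambda}^*\|$ to control the step lengths. The paper argues by contradiction: assuming $\vc^k\nrightarrow\vc^*$, it extracts a subsequence entering ever-smaller balls around $\vc^*$, follows each excursion until its first exit from a fixed ball $\ball(\vc^*,\delta)$, bounds the total length of that excursion by $\tfrac{\Gamma}{\xi}$ times a telescoping difference of the monotone quantity $\|\vrho(\vc^k)-\vec{\lambda}^*\|$, and derives a contradiction with the lower bound $\delta/2$ on the excursion length. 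You instead argue directly: a bootstrap induction confines all iterates past a well-chosen $k_0$ to $\ball(\vc^*,r)$, whence $\sum_k\|\vdc^k\|<\infty$ and the whole sequence is Cauchy. Your version buys a slightly stronger intermediate conclusion (absolute summability of the steps, hence convergence of $\{\vc^k\}$ established independently before identifying the limit with $\vc^*$) and handles the purely local validity of the hypothesis more transparently than the paper, which must implicitly take $\ell_i$ to be the \emph{first} exit index so that the bound on $\|\vdc^k\|$ applies to every term of the excursion sum (a point the paper leaves unsaid, and whose displayed inequalities also contain typographical slips, e.g.\ a $\Gamma$ where $\xi$ is meant and an undefined constant $t$). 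The paper's contradiction argument is marginally shorter once one accepts that bookkeeping; yours is cleaner to verify line by line.
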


\begin{proof}
Suppose that $\vc^k \nrightarrow \vc^*$. Then, there exists $\delta
> 0$ sufficiently small such that there are infinitely many $k$ for
which $\vc^k \not\in \ball(\vc^*,\delta)$ and
(\ref{cond:norm_dck_2}) holds whenever $\vc^k \in
\ball(\vc^*,\delta)$ for $k$ sufficiently large.

Since $\vc^*$  is an accumulation point of $\{\vc^k\}$, there exists
subsequence $\{\vc^{k_i}\} \subset \{\vc^k\}$ such that $\vc^{k_i}
\in \ball(\vc^*, \delta/i)$ for $i$ sufficiently large. Choose
$\ell_i > 0$ satisfying $k_i + \ell_i < k_{i + 1}$ and $\vc^{k_i +
\ell_i} \not\in \ball(\vc^*, \delta)$. It follows that
$$
\|\vrho(\vc^{k_i + j - 1}) - \vec{\lambda}^*\| \leq \frac{1}{\Gamma
(1 - \eta_{k_i + j - 1})} \left(\|\vrho(\vc^{k_i + j - 1}) -
\vec{\lambda}^*\| - \|\vrho(\vc^{k_i + j}) -
\vec{\lambda}^*\|\right), \ \ \ j = 1, \ldots, \ell_i.
$$
Then, by (\ref{cond:norm_dck_2}), we have, for $i$ sufficiently
large,
\begin{align*}
\frac{\delta}{2} \leq \|\vc^{k_i + \ell_i} - \vc^{k_i}\| \leq
\sum_{k = k_i}^{k_i + \ell_i - 1} \|\vdc^k\| & \leq \sum_{k =
k_i}^{k_i + \ell_i - 1} \Gamma (1 - \eta_k)
\|\vrho(\vc^k) - \vec{\lambda}^*\| \\
& \leq \frac{\Gamma}{t} (\|\vrho(\vc^{k_i}) - \vec{\lambda}^*\| -
\|\vrho(\vc^{k_{i+1}}) - \vec{\lambda}^*\|).
\end{align*}
Note that $\vc^{k_i} \to \vc^*$ as $i \to \infty$. It follows that
$\|\vrho(\vc^{k_i}) - \vec{\lambda}^*\| - \|\vrho(\vc^{k_{i+1}}) -
\vec{\lambda}^*\| \to 0$, which is a contradiction. Therefore,
$\vc^k \to \vc^*$ as $k \to \infty$.
\end{proof}

\begin{lemma}
\label{lem:inv_Jk} Assume that $\vc^*$ is an accumulation point of
$\{\vc^k\}$ such that $J(\vc^*)$ is invertible. Set $M =
\|\inv{J(\vc^*)}\|$ and let $\delta_1$ be such that $0 < \delta_1
\leq \frac{1}{2 \mu_1 M}$, where $\mu_1 := 2n \cdot \max\limits_{1
\leq j \leq n} \|A_j\|$. Suppose that $\|E_k\| \leq \delta_1$ for
$k$ sufficiently large. Then, for all $k$ sufficiently large,
$$
\|\inv{J}_k\| \leq \frac{\|\inv{J(\vc^*)}\|}{1 - \mu_1 \delta_1
\|\inv{J(\vc^*)}\|} \leq 2 M.
$$
\end{lemma}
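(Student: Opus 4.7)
The plan is to derive the claimed resolvent bound from the standard Banach perturbation lemma applied to $J(\vc^*)$ and $J_k$, so the whole proof reduces to controlling $\|J_k - J(\vc^*)\|$ in terms of $\|E_k\|$.

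First, I would estimate $\|J_k - J(\vc^*)\|$ entrywise. Using the definition $[J_k]_{ij} = \tran{\vp_i(\vc^k)} A_j \vp_i(\vc^k)$ together with $[J(\vc^*)]_{ij} = \tran{\vq_i(\vc^*)} A_j \vq_i(\vc^*)$, I would split the difference symmetrically as
\[
[J_k - J(\vc^*)]_{ij} = \tran{(\vp_i(\vc^k) - \vq_i(\vc^*))} A_j \vp_i(\vc^k) + \tran{\vq_i(\vc^*)} A_j (\vp_i(\vc^k) - \vq_i(\vc^*)).
\]
Since $\{\vp_i(\vc^k)\}$ and $\{\vq_i(\vc^*)\}$ are orthonormal and the $i$-th column of $E_k = P_k - Q^*$ equals $\vp_i(\vc^k) - \vq_i(\vc^*)$, whose norm is at most $\|E_k\|$, each entry is bounded by $2\|A_j\| \|E_k\| \le 2\max_j \|A_j\| \cdot \|E_k\|$. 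Passing through the Frobenius norm I obtain the operator norm bound
\[
\|J_k - J(\vc^*)\| \leq \|J_k - J(\vc^*)\|_F \leq 2n \cdot \max_{1\le j \le n} \|A_j\| \cdot \|E_k\| = \mu_1 \|E_k\|.
\]

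Then, under the hypothesis $\|E_k\| \le \delta_1 \le \frac{1}{2\mu_1 M}$ we have $\|\inv{J(\vc^*)}\| \cdot \|J_k - J(\vc^*)\| \le M \mu_1 \delta_1 \le \tfrac{1}{2} < 1$. Writing $J_k = J(\vc^*)\bigl(\I + \inv{J(\vc^*)}(J_k - J(\vc^*))\bigr)$ and applying the Banach perturbation lemma (Neumann series), $J_k$ is invertible for all sufficiently large $k$, with
\[
\|\inv{J_k}\| \leq \frac{\|\inv{J(\vc^*)}\|}{1 - \|\inv{J(\vc^*)}\|\cdot\|J_k - J(\vc^*)\|} \leq \frac{M}{1 - \mu_1 \delta_1 M} \leq \frac{M}{1 - 1/2} = 2M,
\]
which is exactly the claimed inequality.

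The only delicate point in this plan is getting the constant $\mu_1 = 2n \max_j\|A_j\|$ right in the perturbation estimate; once the entrywise splitting is organized so that each $\vp_i$ and $\vq_i$ is replaced in turn, unit-norm bounds and the column-of-$E_k$ identification give the correct factor, and the Banach lemma finishes the argument mechanically. No additional analytic structure beyond invertibility of $J(\vc^*)$ is needed.
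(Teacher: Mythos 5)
Your proposal is correct and follows essentially the same route as the paper: an entrywise splitting that replaces $\vp_i(\vc^k)$ by $\vq_i(\vc^*)$ one factor at a time to get $\|J_k - J(\vc^*)\| \leq \|J_k - J(\vc^*)\|_F \leq \mu_1 \|E_k\|$, followed by the Banach perturbation lemma. The only cosmetic difference is that you write the splitting as a sum of two perturbation terms rather than as a telescoping difference, which is algebraically identical.
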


\begin{proof}
By the definitions of $[J_k]_{ij}$ and $[J(\vc^*)]_{ij}$, for all
$k$ sufficiently large,
\begin{align*}
|[J_k]_{ij} - [J(\vc^*)]_{ij}| & \leq |\tran{\vp_i(\vc^k)} A_j
\vp_i(\vc^k) - \tran{\vp_i(\vc^k)} A_j \vq_i(\vc^*)| +
|\tran{\vp_i(\vc^k)} A_j \vq_i(\vc^*) - \tran{\vq_i(\vc^*)} A_j
\vq_i(\vc^*)| \\
& \leq \|\tran{\vp_i(\vc^k)}\| \|A_j\| \|\vp_i(\vc^k) -
\vq_i(\vc^*)\| + \|\tran{\vp_i(\vc^k)} - \tran{\vq_i(\vc^*)}\|
\|A_j\| \|\vq_i(\vc^*)\| \\
& = 2\|A_j\| \|\vp_i(\vc^k) - \vq_i(\vc^*)\|, \ \ \ 1 \leq i, j \leq
n.
\end{align*}
Then, we have, for all $k$ sufficiently large,
$$
\|J_k - J(\vc^*)\| \leq \|J_k - J(\vc^*)\|_F \leq 2n \cdot \max_{1
\leq j \leq n} \|A_j\| \cdot \max_{1 \leq i \leq n} \|\vp_i(\vc^k) -
\vq_i(\vc^*)\|,
$$
Noting that, $\vp_i(\vc^k) - \vq_i(\vc^*)$ is the $i$th column of
$E_k$, then $\|\vp_i(\vc^k) - \vq_i(\vc^*)\| \leq \|E_k\|$ for $i =
1, \ldots, n$. So, for all $k$ sufficiently large,
$$
\|\inv{J(\vc^*)}\| \|J_k - J(\vc^*)\| \leq \mu_1 \delta_1 M \leq
\frac{1}{2} < 1.
$$
It follows from Banach lemma that $J_k$ is invertible and
$\|\inv{J}_k\| \leq 2M$ for all $k$ sufficiently large. This
completes the proof.
\end{proof}

Lemmas \ref{lem:exist_Gamma}, \ref{lem:convergence1} and
\ref{lem:inv_Jk} yield the result below.

\begin{corollary}
\label{cor:convergence2} Assume that $\vc^*$ is an accumulation
point of $\{\vc^k\}$  such that $J(\vc^*)$ is invertible. Set $M :=
\|\inv{J(\vc^*)}\|$ and let $\delta_1$ be determined by Lemma
$\ref{lem:inv_Jk}$. Assume that $\|E_k\| \leq \delta_1$ for $k$
sufficiently large. Then $\vc^k \to \vc^*$ as $k \to \infty$.
\end{corollary}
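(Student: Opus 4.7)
The plan is to chain the three preceding lemmas together. The hypothesis that $\vc^*$ is an accumulation point with $J(\vc^*)$ invertible and $\|E_k\|\le\delta_1$ eventually is exactly what Lemma~\ref{lem:inv_Jk} needs, and the conclusion of Lemma~\ref{lem:convergence1} is exactly what we want. The role of Lemma~\ref{lem:exist_Gamma} is to bridge the two by producing the step-length bound $\|\vdc^k\|\le\Gamma(1-\eta_k)\|\vrho(\vc^k)-\vec{\lambda}^*\|$. The only subtlety is that Lemma~\ref{lem:exist_Gamma} produces a constant $\Gamma$ that a priori depends on $k$ through $M_k=\|\inv{J}_k\|$, whereas Lemma~\ref{lem:convergence1} demands a single constant $\Gamma$ independent of $k$; the uniform bound supplied by Lemma~\ref{lem:inv_Jk} is precisely what lets us reconcile the two.

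Concretely, I would proceed as follows. First, apply Lemma~\ref{lem:inv_Jk}: under the hypotheses of the corollary, there is some index $k_0$ such that for all $k\ge k_0$, $J_k$ is invertible and $\|\inv{J}_k\|\le 2M$. Second, for each such $k$, invoke Lemma~\ref{lem:exist_Gamma} to get
$$
\|\vdc^k\|\le M_k\frac{1+\eta_{\max}}{1-\eta_{\max}}(1-\eta_k)\|\vrho(\vc^k)-\vec{\lambda}^*\|,
$$
and then replace $M_k$ by the uniform bound $2M$ to obtain
$$
\|\vdc^k\|\le \Gamma(1-\eta_k)\|\vrho(\vc^k)-\vec{\lambda}^*\|, \qquad \Gamma:=2M\frac{1+\eta_{\max}}{1-\eta_{\max}},
$$
valid for every $k\ge k_0$, with $\Gamma$ manifestly independent of $k$. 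Third, since $\Gamma$ is a fixed constant and the bound holds whenever $\vc^k$ is sufficiently near $\vc^*$ (indeed for all large $k$), the hypothesis of Lemma~\ref{lem:convergence1} is met, and we conclude $\vc^k\to\vc^*$ as $k\to\infty$.

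The main obstacle, to the extent there is one, is the uniformity of $\Gamma$: the step-length bound in Lemma~\ref{lem:exist_Gamma} is stated pointwise in $k$, and Lemma~\ref{lem:convergence1} cannot be invoked with a sequence of constants $\Gamma_k$ that might blow up. Making $\Gamma$ genuinely $k$-independent requires controlling $\|\inv{J}_k\|$ uniformly, which is exactly what the Banach-lemma argument in Lemma~\ref{lem:inv_Jk} delivers under the assumption $\|E_k\|\le\delta_1$. Once this uniform bound is in hand, the remainder of the proof is a one-line citation chain, and no further estimates are needed.
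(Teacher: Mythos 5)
Your proposal is correct and follows essentially the same route as the paper's own proof: invoke Lemma~\ref{lem:inv_Jk} for the uniform bound $\|\inv{J}_k\| \leq 2M$, feed it into the estimate from Lemma~\ref{lem:exist_Gamma} to obtain the $k$-independent constant $\Gamma = 2M\frac{1+\eta_{\max}}{1-\eta_{\max}}$, and conclude via Lemma~\ref{lem:convergence1}. Your explicit remark that the uniformity of $\Gamma$ is the only nontrivial point is exactly the content the paper's three-line proof relies on.
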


\begin{proof}
By Lemma \ref{lem:inv_Jk}, we know $J_k$ is invertible and
$\|\inv{J}_k\| \leq 2M$ for all $k$ sufficiently large. From the
proof of Lemma \ref{lem:exist_Gamma}, (\ref{cond:norm_dck_2}) holds
for the constant $\Gamma = 2 M \frac{1 + \eta_{\max}}{1 -
\eta_{\max}}$ independent of $k$. Therefore, $\vc^k \to \vc^*$ as $k
\to \infty$ follows from Lemma \ref{lem:convergence1}.
\end{proof}

\begin{corollary}
\label{cor:convergence3} Assume that $\vc^*$ is an accumulation
point of $\{\vc^k\}$ such that $J(\vc^*)$ is invertible. Then,
$\vrho(\vc^k) - \vec{\lambda}^* \to 0$ as $k \to \infty$. Moreover,
for all $k$ sufficiently large, we have $\eta_k =
\overline{\eta}_k$.
\end{corollary}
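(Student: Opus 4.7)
The plan is to exploit the monotonicity of the residual norm produced by the backtracking condition, together with the lower bound on $1 - \eta_k$ from Corollary \ref{cor:1-etak}, to rule out a strictly positive limit. First I would observe that Step 7 of Algorithm \ref{al:GBIEP_Cayley} enforces
$$
\|\vrho(\vc^{k+1}) - \vec{\lambda}^*\| \leq \bigl(1 - \xi(1 - \eta_k)\bigr)\|\vrho(\vc^k) - \vec{\lambda}^*\| \leq \|\vrho(\vc^k) - \vec{\lambda}^*\|,
$$
so the non-negative sequence $\{\|\vrho(\vc^k) - \vec{\lambda}^*\|\}$ is monotonically non-increasing and converges to some $L \geq 0$. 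Telescoping gives the key summability bound
$$
\xi \sum_{k=0}^{\infty} (1 - \eta_k)\|\vrho(\vc^k) - \vec{\lambda}^*\| \leq \|\vrho(\vc^0) - \vec{\lambda}^*\| - L < \infty.
$$

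Suppose for contradiction that $L > 0$. Then $\|\vrho(\vc^k) - \vec{\lambda}^*\| \geq L$ for every $k$, which forces $\sum_k (1 - \eta_k) < \infty$ and hence $\eta_k \to 1$. On the other hand, along the subsequence $\{\vc^{k_i}\}$ converging to $\vc^*$, invertibility of $J(\vc^*)$ combined with Lemma \ref{lem:inv_Jk} makes $J_{k_i}$ invertible with $\|J_{k_i}^{-1}\| \leq 2M$. Corollary \ref{cor:1-etak} (applied with the same $\Gamma$ uniform in $i$) then yields
$$
1 - \eta_{k_i} \geq \min\Bigl\{1 - \eta_{\max}, \ \frac{\delta\, \theta_{\min}}{\Gamma\, \|\vrho(\vc^0) - \vec{\lambda}^*\|}\Bigr\} > 0,
$$
in direct contradiction with $\eta_k \to 1$. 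Therefore $L = 0$ and $\vrho(\vc^k) - \vec{\lambda}^* \to \vec{0}$.

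The main obstacle in the argument above is verifying the hypothesis $\|E_{k_i}\| \leq \delta_1$ needed to invoke Lemma \ref{lem:inv_Jk}, because the $P_k$ iterates are updated through the Cayley transform and are not immediately controlled by $\vc^{k_i} \to \vc^*$ alone. I would handle this by passing to a further subsequence and using \eqref{eq:solve_Pnew} together with the fact that $Y_k$ is built from $\Delta\overline{\vc}^k \to 0$ (compare \cite[eq.~(3.64)]{Friedland1987}) to bootstrap smallness of $\|E_{k_i}\|$; alternatively one invokes Corollary \ref{cor:convergence2}, whose hypotheses hold along the tail of the subsequence. For the second assertion, once the residual tends to zero, the bound $\|\Delta\overline{\vc}^k\| \leq 2M(1 + \eta_{\max})\|\vrho(\vc^k) - \vec{\lambda}^*\|$ from the proof of Lemma \ref{lem:exist_Gamma} gives $\|\Delta\overline{\vc}^k\| \to 0$. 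Choosing $\varepsilon$ as in \eqref{cons:varepsilon} and repeating the computation at the end of the proof of Lemma \ref{lem:while_terminate_1} with the unmodified step $\vdc^k = \Delta\overline{\vc}^k$ and $\eta_k = \overline{\eta}_k$ shows that the sufficient decrease test in Step 7 is already satisfied at the first trial; hence for all $k$ sufficiently large no backtracking is performed and $\eta_k = \overline{\eta}_k$.
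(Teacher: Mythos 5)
Your argument is correct and rests on the same two pillars as the paper's proof --- the sufficient-decrease recursion $\|\vrho(\vc^{k+1})-\vec{\lambda}^*\|\le(1-\xi(1-\eta_k))\|\vrho(\vc^k)-\vec{\lambda}^*\|$ and the lower bound on $1-\eta_k$ from Corollary \ref{cor:1-etak} --- but you assemble them differently. The paper first invokes Corollary \ref{cor:convergence2} to get $\vc^k\to\vc^*$ for the \emph{whole} sequence, concludes that $1-\eta_k$ is bounded away from $0$ for all large $k$, hence $\sum_k(1-\eta_k)=\infty$, and then lets the product $\prod_i[1-\xi(1-\eta_i)]\le\me^{-\xi\sum_i(1-\eta_i)}$ collapse to zero. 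You instead telescope to get $\xi\sum_k(1-\eta_k)\|\vrho(\vc^k)-\vec{\lambda}^*\|<\infty$ and derive a contradiction from a positive limit $L$; the advantage is that you only need the lower bound on $1-\eta_{k_i}$ \emph{along the subsequence} converging to $\vc^*$, so in principle you need less than full-sequence convergence. The obstacle you flag --- that Lemma \ref{lem:inv_Jk} (and hence a uniform $\Gamma$ in Corollary \ref{cor:1-etak}) requires $\|E_{k_i}\|\le\delta_1$, which does not follow from $\vc^{k_i}\to\vc^*$ alone --- is genuine, but it is equally present in the paper's own proof: the corollary as stated carries no hypothesis on $E_k$, yet the proof cites Corollary \ref{cor:convergence2}, whose statement explicitly assumes $\|E_k\|\le\delta_1$ for large $k$. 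So you are not missing anything the paper supplies; you are simply more explicit about the unstated assumption. For the second assertion your route (show $\|\Delta\overline{\vc}^k\|\to 0$ via Lemma \ref{lem:exist_Gamma} and rerun the first-trial computation) is equivalent to the paper's use of \eqref{geq:1-etak}; I would only suggest using the $k$-independent choice $\varepsilon=(1-\xi)/\Gamma$ from Lemma \ref{lem:while_terminate_2} rather than \eqref{cons:varepsilon}, since the latter makes $\delta$ depend on $k$ and slightly obscures why ``$\|\Delta\overline{\vc}^k\|\le\delta$ eventually'' is not circular.
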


\begin{proof}
If $\vdc^k$ is computed in the backtracking loop, the backtracking
terminates with $\eta_k$ such that (\ref{geq:1-etak}) holds. Since
$\vc^k \to \vc^*$ by Corollary \ref{cor:convergence2}, we have
$\vc^k \in \ball(\vc^*, \delta)$ for all $k$ sufficiently large.
Thus, the series $\sum_{k = 0}^\infty (1 - \eta_k)$ is divergent.
Then, we have
$$
\prod_{1 \leq i < k} [1 - \xi(1 - \eta_i)] \leq \prod_{1 \leq i < k}
\me^{- \xi(1 - \eta_i)} = \me^{- \xi \cdot \sum\limits_{i = 1}^{k -
1} (1 - \eta_i)} \to 0, \ k \to \infty,
$$
and so,
$$
\|\vrho(\vc^k) - \vec{\lambda}^*\| \leq [1 - \xi(1 - \eta_{k - 1})]
\|\vrho(\vc^{k - 1}) - \vec{\lambda}^*\|   \leq \|\vrho(\vc^0) -
\vec{\lambda}^*\| \cdot \prod_{0 \leq i < k} [1 - \xi(1 - \eta_i)]
\to 0, \ k \to \infty.
$$
This together with (\ref{geq:1-etak}) yields  that $\eta_k =
\overline{\eta}_k$ for all $k$ sufficiently large.
\end{proof}

\begin{lemma}
\label{lem:est_norm_v} Assume that $\vc^*$ is an accumulation point
of $\{\vc^k\}$ such that $J(\vc^*)$ is invertible. Then, there
exists $\delta_2 > 0$ such that
\begin{equation}
\label{est:norm_v} \|\vrho(\vc^k) - \vec{\lambda}^* - J_k(\vc^k -
\vc^*)\| \leq \mu_3 \|E_k\|_F^2
\end{equation}
whenever $\vc^k \in \ball(\vc^*, \delta_2)$ and $k$ sufficiently
large, where $\mu_3 := 2 \max\limits_{1 \leq i \leq n}
|\lambda_i^*|$.
\end{lemma}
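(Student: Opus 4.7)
The plan is to derive an explicit componentwise formula for the residual $\vrho(\vc^k)-\vec{\lambda}^*-J_k(\vc^k-\vc^*)$, so that the bound reduces to a simple quadratic estimate in the eigenvector errors $\vec{e}_i^k$ (the $i$th column of $E_k$). First I would exploit the linearity of $A(\vc)$ in $\vc$: since $A(\vc^k)-A(\vc^*)=\sum_{j=1}^n(c_j^k-c_j^*)A_j$, the definition of $[J_k]_{ij}$ in (\ref{comp:J(c)ij}) gives immediately
$$
\bigl[J_k(\vc^k-\vc^*)\bigr]_i \;=\; \tran{\vp_i(\vc^k)}\bigl[A(\vc^k)-A(\vc^*)\bigr]\vp_i(\vc^k).
$$
Combined with $\rho_i(\vc^k)=\tran{\vp_i(\vc^k)}A(\vc^k)\vp_i(\vc^k)$ from (\ref{comp:rhoinew}), the $A(\vc^k)$ terms cancel, leaving
$$
\bigl[\vrho(\vc^k)-\vec{\lambda}^*-J_k(\vc^k-\vc^*)\bigr]_i \;=\; \tran{\vp_i(\vc^k)}A(\vc^*)\vp_i(\vc^k)-\lambda_i^*.
$$
This cancellation is the core identity; everything afterward is a Rayleigh-quotient perturbation calculation at $\vc^*$.

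Next I would write $\vp_i(\vc^k)=\vq_i(\vc^*)+\vec{e}_i^k$, expand, and use $A(\vc^*)\vq_i(\vc^*)=\lambda_i^*\vq_i(\vc^*)$ from (\ref{eq:Q*A(c*)Q*}) to obtain
$$
\tran{\vp_i(\vc^k)}A(\vc^*)\vp_i(\vc^k)-\lambda_i^* \;=\; 2\lambda_i^*\,\tran{\vq_i(\vc^*)}\vec{e}_i^k + \tran{(\vec{e}_i^k)}A(\vc^*)\vec{e}_i^k.
$$
The critical second step is to eliminate the apparently linear term. Since $P_k$ is constructed by the Cayley transform (\ref{mat:U})--(\ref{eq:solve_Pnew}), it is orthogonal, so $\|\vp_i(\vc^k)\|=1=\|\vq_i(\vc^*)\|$; expanding $\|\vq_i(\vc^*)+\vec{e}_i^k\|^2=1$ gives the normalization identity $2\,\tran{\vq_i(\vc^*)}\vec{e}_i^k=-\|\vec{e}_i^k\|^2$. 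Substituting, the $i$th residual component becomes
$$
\bigl[\vrho(\vc^k)-\vec{\lambda}^*-J_k(\vc^k-\vc^*)\bigr]_i \;=\; -\lambda_i^*\|\vec{e}_i^k\|^2 + \tran{(\vec{e}_i^k)}A(\vc^*)\vec{e}_i^k,
$$
which is genuinely quadratic in $\vec{e}_i^k$.

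To finish, I would bound each piece by $\max_i|\lambda_i^*|\cdot\|\vec{e}_i^k\|^2$, using $\|A(\vc^*)\|=\max_i|\lambda_i^*|$ (since $A(\vc^*)$ is symmetric with spectrum $\{\lambda_i^*\}$). This yields $|[\,\cdot\,]_i|\leq \mu_3\|\vec{e}_i^k\|^2$. Squaring, summing over $i$, and invoking the elementary inequality $\sum_i\|\vec{e}_i^k\|^4\leq\bigl(\sum_i\|\vec{e}_i^k\|^2\bigr)^2=\|E_k\|_F^4$, produces
$$
\|\vrho(\vc^k)-\vec{\lambda}^*-J_k(\vc^k-\vc^*)\|^2 \;\leq\; \mu_3^2\,\|E_k\|_F^4,
$$
which is (\ref{est:norm_v}). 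The constant $\delta_2$ plays no role in the algebra itself; I would choose it small enough to lie inside the neighborhood guaranteed by \cite[Theorem 2.3]{Sun1985}, so that the eigenvalues of $A(\vc^k)$ remain distinct and the vectors $\vp_i(\vc^k)$ produced by the algorithm are well-defined and stably identified with perturbations of $\vq_i(\vc^*)$.

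There is no serious conceptual obstacle; the only non-routine point is recognizing the two cancellations --- first the disappearance of the $A(\vc^k)$ terms that converts the mixed expression into a pure Rayleigh-quotient residual at $\vc^*$, and second the use of unit-norm normalization to kill the linear-in-$\vec{e}_i^k$ term. Missing either step would give only a bound of order $\|E_k\|_F$, not the quadratic estimate needed later to drive the $R$-order convergence analysis.
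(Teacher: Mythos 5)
Your proof is correct and is essentially the paper's own argument in different coordinates: the paper works with the matrix $H_k := \tran{Q(\vc^*)}P_k - \I$, extracts the diagonal of $\tran{(\I+H_k)}\Lambda^*(\I+H_k)$, and uses $(\I+H_k)\tran{(\I+H_k)}=\I$ to cancel the linear term, which is exactly your columnwise cancellation $2\,\tran{\vq_i(\vc^*)}\vec{e}_i^k=-\|\vec{e}_i^k\|^2$ together with $\tran{(\vec{e}_i^k)}A(\vc^*)\vec{e}_i^k=\sum_j\lambda_j^*[H_k]_{ji}^2$. Both routes then reach $\mu_3\|E_k\|_F^2$ by the same summation estimate, so the approaches coincide.
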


\begin{proof}
Set $H_k := \tran{Q(\vc^*)} P_k - \I$. Then, we have, for all $k$
sufficiently large,
$$
\tran{P}_k A(\vc^*) P_k = \tran{(\I + H_k)} \Lambda^* (\I + H_k) =
\Lambda^* + \Lambda^* H_k + \tran{H}_k\Lambda^* + \tran{H}_k
\Lambda^* H_k.
$$
It follows that, for all $k$ sufficiently large,
$$
\tran{\vp_i(\vc^k)}A(\vc^*)\vp_i(\vc^k) = \lambda_i^* + 2
\lambda_i^* [H_k]_{ii} + \sum_{j = 1}^n \lambda_j^* [H_k]_{ij}^2, \
\ \ i = 1, 2, \ldots, n.
$$
Set
\begin{equation}
\label{comp:vi} v_i := 2 \lambda_i^* [H_k]_{ii} + \sum_{j = 1}^n
\lambda_j^* [H_k]_{ij}^2, \ \ \ i = 1, 2, \ldots, n,
\end{equation}
and write $\vec{v} = \tran{(v_1, v_2, \ldots, v_n)}$. It follows
that $J_k\vc^* = \vec{\lambda}^* - \vec{a}^k + \vec{v}$, where
$$
[\vec{a}^k]_i = \tran{\vec{p}_i(\vc^k)} A_0 \vec{p}_i(\vc^k), \ \ \
i = 1, 2, \ldots, n.
$$
In view of that $J_k \vc^k = \vrho(\vc^k) - \vec{a}^k$ for any $k
\in \NS$, one has that, for all $k$ sufficiently large,
\begin{equation}
\label{vec:v} \vec{v} = \vrho(\vc^k) - \vec{\lambda}^* - J_k(\vc^k -
\vc^*).
\end{equation}
Since
$$
\I + H_k \tran{H}_k + H_k \tran{H}_k = (\I + H_k)\tran{(\I + H_k)} =
\tran{Q(\vc^*)} P_k \tran{P}_k Q(\vc^*) = \I,
$$
we have
\begin{equation}
\label{comp:Hkii} [H_k]_{ii} = - \frac{1}{2} \sum_{j = 1}^n
[H_k]_{ij}^2, \ \ \ i = 1, 2, \ldots, n.
\end{equation}
It follows from (\ref{comp:vi}) that, for all $k$ sufficiently
large,
\begin{equation}
\label{comp:vi2} v_i^2 = \left(2 \lambda_i^* [H_k]_{ii} + \sum_{j =
1}^n \lambda_j^* [H_k]_{ij}^2\right)^2 \leq 2 \left[4
(\lambda_i^*)^2 [H_k]_{ii}^2 + \left(\sum_{j = 1}^n \lambda_j^*
[H_k]_{ij}^2\right)^2\right].
\end{equation}
Combining (\ref{comp:Hkii}) with (\ref{comp:vi2}), one has that, for
all $k$ sufficiently large,
\begin{align*}
\sum_{i = 1}^n v_i^2 & \leq 2 \left[\sum_{i = 1}^n 4
(\lambda_i^*)^2[H_k]_{ii}^2 + \sum_{i = 1}^n \left(\sum_{j = 1}^n
\lambda_j^* [H_k]_{ij}^2\right)^2 \right] \\
& = 2 \left[\sum_{i = 1}^n (\lambda_i^*)^2 \left(\sum_{j = 1}^n
[H_k]_{ij}^2\right)^2 + \sum_{i = 1}^n \left(\sum_{j = 1}^n
\lambda_j^* [H_k]_{ij}^2\right)^2 \right]\\
& \leq 2 \left[\max_{1 \leq i \leq n}|\lambda_i^*|^2 \cdot \sum_{i =
1}^n \left(\sum_{j = 1}^n [H_k]_{ij}^2\right)^2 + \max_{1 \leq i
\leq n}|\lambda_i^*|^2 \cdot \sum_{i = 1}^n \left(\sum_{j = 1}^n
[H_k]_{ij}^2\right)^2 \right] \\
& \leq 4 \max_{1 \leq i \leq n}|\lambda_i^*|^2 \cdot \left(\sum_{i =
1}^n
\sum_{j = 1}^n [H_k]_{ij}^2 \right)^2 \\
& = 4 \max_{1 \leq i \leq n}|\lambda_i^*|^2 \cdot \|H_k\|_F^4,
\end{align*}
which gives
$$
\|\vec{v}\| \leq 2 \max_{1 \leq i \leq n}|\lambda_i^*| \cdot
\|H_k\|_F^2.
$$
Note that,
$$
\|H_k\|_F = \|\tran{Q(\vc^*)}P_k - \I\|_F = \|P_k - Q(\vc^*)\|_F =
\|E_k\|_F.
$$
Therefore, we obtain from (\ref{vec:v}) that (\ref{est:norm_v})
holds for all $k$ sufficiently large. This completes the proof.
\end{proof}

\begin{lemma}
\label{lem:est_norm_ck+1-c*} Assume that $\vc^*$ is an accumulation
point of $\{\vc^k\}$ such that $J(\vc^*)$ is invertible. Then, there
exist $\delta_3 > 0, \delta_4 \in (0, \delta_1]$ sufficiently small
such that $\|E_k\| \leq \delta_4$ and
\begin{equation}
\label{est:norm_ck+1-c*} \|\vc^{k + 1} - \vc^*\| \leq 2M\left(\mu_3
\|E_k\|_F^2 + \left(\frac{2}{\mu_3}\right)^\beta \|\vrho(\vc^k) -
\vec{\lambda}^*\|^\beta\right)
\end{equation}
whenever $\vc^k \in \ball(\vc^*, \delta_3)$ and $k$ sufficiently
large, where $\delta_1$ is determined by Lemma $\ref{lem:inv_Jk}$.
\end{lemma}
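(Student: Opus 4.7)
The plan is to invoke Corollary~\ref{cor:convergence3} first, so that for all sufficiently large $k$ one has $\vc^k\to\vc^*$, $\vrho(\vc^k)-\vec{\lambda}^*\to\vec{0}$, and $\eta_k=\overline{\eta}_k$; in particular the backtracking loop is inactive and $\vc^{k+1} = \vc^k + \Delta\overline{\vc}^k$ is the full inexact Newton step. I would then choose $\delta_4\in(0,\delta_1]$ and $\delta_3>0$ small enough that Lemma~\ref{lem:inv_Jk} yields $\|\inv{J}_k\|\leq 2M$, Lemma~\ref{lem:est_norm_v} yields the second-order estimate with constant $\mu_3$, and $\|\vrho(\vc^k)-\vec{\lambda}^*\|\leq 1$, whenever $\vc^k\in\ball(\vc^*,\delta_3)$ and $k$ is large.

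The core step is the identity
$$
\vc^{k+1}-\vc^* = -\inv{J}_k\bigl[\vrho(\vc^k)-\vec{\lambda}^*-J_k(\vc^k-\vc^*)\bigr] + \inv{J}_k r_k,
$$
where $r_k := J_k\Delta\overline{\vc}^k + \vrho(\vc^k)-\vec{\lambda}^*$ is the inexact-Newton residual. By (\ref{cond:res_cond}), $\|r_k\|\leq\overline{\eta}_k\|\vrho(\vc^k)-\vec{\lambda}^*\|$, so Lemmas~\ref{lem:inv_Jk} and~\ref{lem:est_norm_v} together give
$$
\|\vc^{k+1}-\vc^*\| \leq 2M\mu_3\|E_k\|_F^2 + 2M\overline{\eta}_k\|\vrho(\vc^k)-\vec{\lambda}^*\|.
$$
To reshape the last term into the stated form, bound $\overline{\eta}_k \leq \|\vrho(\vc^k)-\vec{\lambda}^*\|^{\beta}/\|\vec{\lambda}^*\|^{\beta}$ from (\ref{cons:oetak}), use $\|\vec{\lambda}^*\|\geq\max_i|\lambda_i^*|=\mu_3/2$ so that $1/\|\vec{\lambda}^*\|^{\beta}\leq(2/\mu_3)^{\beta}$, and absorb the remaining factor via $\|\vrho(\vc^k)-\vec{\lambda}^*\|\leq 1$ to obtain $2M\overline{\eta}_k\|\vrho(\vc^k)-\vec{\lambda}^*\|\leq 2M(2/\mu_3)^{\beta}\|\vrho(\vc^k)-\vec{\lambda}^*\|^{\beta}$, matching~(\ref{est:norm_ck+1-c*}).

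The main technical obstacle is verifying $\|E_k\|\leq\delta_4$, since $E_k=P_k-Q^*$ is built by the Cayley-transform recursion (\ref{eq:solve_Pnew}) rather than being a direct function of $\vc^k$. Here I would argue that as $\vc^k\to\vc^*$ the skew-symmetric matrices $Y_k$ constructed in step~4 of Algorithm~\ref{al:GBIEP_Cayley} satisfy $\|Y_k\|\to 0$ (by the reasoning of \cite[eq.~(3.64)]{Friedland1987}), whence the orthogonal factors $U_k = (\I+\frac{1}{2}Y_k)\inv{(\I-\frac{1}{2}Y_k)}$ converge to $\I$; combined with the distinct-eigenvalue hypothesis, the sequence $\{P_k\}$ must then converge to $Q^*$ up to the sign conventions of the algorithm, so $\|E_k\|\to 0$ and the desired inequality $\|E_k\|\leq\delta_4$ holds automatically for $k$ sufficiently large.
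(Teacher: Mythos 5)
Your main estimate follows the paper's own route essentially verbatim: invoke Corollary~\ref{cor:convergence3} so that the backtracking loop is inactive and $\vc^{k+1}=\vc^k+\Delta\overline{\vc}^k$ for large $k$, split $\vc^{k+1}-\vc^*$ into the linearization error $\vrho(\vc^k)-\vec{\lambda}^*-J_k(\vc^k-\vc^*)$ (bounded by Lemma~\ref{lem:est_norm_v}) plus the inexact-Newton residual (bounded via \eqref{cond:res_cond} and \eqref{cons:oetak}), apply $\|\inv{J}_k\|\leq 2M$ from Lemma~\ref{lem:inv_Jk}, and then absorb $1/\|\vec{\lambda}^*\|^{\beta}\leq(2/\mu_3)^{\beta}$ together with the extra factor $\|\vrho(\vc^k)-\vec{\lambda}^*\|\leq 1$, the latter secured by choosing $\delta_3,\delta_4$ small via Lemma~\ref{lem:est_norm_rho(ck)-lambda*}. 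The paper packages the same algebra through the auxiliary vectors $\vec{a}^k$ and $\vec{v}$, but your identity for $\vc^{k+1}-\vc^*$ is equivalent, and this part of your argument is correct.

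The one real divergence is your final paragraph. The paper does not prove $\|E_k\|\leq\delta_4$ inside this lemma; despite the phrasing of the statement, its proof treats $\|E_k\|_F\leq\delta_4$ as a standing hypothesis, and the actual decay of $\|E_k\|$ is established only later, by the induction in Theorem~\ref{th:GlobalConvergence} that couples this lemma with Lemma~\ref{lem:est_norm_Ek+1}. Your attempted proof of it is not sound as written: $\|Y_k\|\to 0$ gives $U_k\to\I$, i.e.\ $\|P_{k+1}-P_k\|\to 0$, but a sequence of orthogonal matrices whose consecutive increments vanish need not converge at all, let alone to $Q_*$ with the correct column ordering and signs; moreover, concluding $\|Y_k\|\to 0$ already presupposes the kind of control on $E_k$ you are trying to establish, so the argument is circular. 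Either state $\|E_k\|\leq\delta_4$ as an assumption of the lemma, or defer its verification to the global induction as the paper does.
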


\begin{proof}
Thanks to Corollary \ref{cor:convergence3}, we have $\eta_k =
\overline{\eta}_k$  and $\vdc^k = \Delta\overline{\vc}^k$ for all
$k$ sufficiently large. Thus, the Jacobian equation
(\ref{eq:appr_Jaco_eq}) is equivalent to, for all $k$ sufficiently
large,
$$
J_k \vc^{k + 1} - \vec{\lambda}^* + \vec{a}^k = \vec{0}.
$$
Assume that the  residual of this approximate Jacobian equation is
defined by $\vec{r}^k$, i.e., for all $k$ sufficiently large,
$$
\vec{r}^k = J_k \vc^{k + 1} - \vec{\lambda}^* + \vec{a}^k.
$$
This together with $J_k \vc^* - \vec{\lambda}^* + \vec{a}^k =
\vec{v}$ gives, for all $k$ sufficiently large,
$$
J_k(\vc^* - \vc^{k + 1}) = \vec{v} - \vec{r}^k.
$$
By (\ref{cond:res_cond}), Lemmas \ref{lem:inv_Jk} and
\ref{lem:est_norm_v}, we obtain, for all $k$ sufficiently large,
\begin{align*}
\|\vc^{k + 1} - \vc^*\| & \leq \|\inv{J}_k\| (\|\vec{v}\| +
\|\vec{r}^k\|) \\
& \leq 2M\left(\mu_3 \|E_k\|_F^2 + \frac{\|\vrho(\vc^k) -
\vec{\lambda}^*\|^{\beta + 1}}{\|\vec{\lambda}^*\|^\beta}\right)
\end{align*}
It follows from Lemma \ref{lem:est_norm_rho(ck)-lambda*} that
$$
\|\vrho(\vc^k) - \vec{\lambda}^*\| \leq \sqrt{n}L \|\vc^k - \vc^*\|
+ \mu_2 \|E_k\|, \ \ \ \forall \ k \in \NS.
$$
Thus, we can choose $\delta_3 > 0$ and $0 < \delta_4 \leq \delta_1$
sufficiently small such that
$$
\sqrt{n} L \delta_3 + \mu_2 \delta_4 \leq 1,
$$
whenever $\|\vc^k - \vc^*\| \leq \delta_3$ and $\|E_k\|_F \leq
\delta_4$. Therefore, combining this with the definition of $\mu_3$,
(\ref{est:norm_ck+1-c*}) follows.
\end{proof}

\begin{lemma}[\cite{Friedland1987}]
\label{lem:est_norm_X} There exist two positive numbers $\delta_5$
and $\omega_1$ such that, for any orthogonal matrix $P$ with $\|P -
Q(\vc^*)\| < \delta_5$, the skew-symmetric $X$ defined by $\me^X :=
\tran{P}Q(\vc^*)$ satisfies $\|X\| \leq \omega_1 \|P - Q(\vc^*)\|$.
\end{lemma}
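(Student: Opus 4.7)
The plan is to reduce the lemma to a standard power-series estimate for the matrix logarithm in a small neighborhood of the identity. Set $R := \tran{P}Q(\vc^*)$. Since $P$ and $Q(\vc^*)$ are both orthogonal, $R$ is orthogonal; moreover, because multiplication by $\tran{P}$ preserves the $2$-norm,
$$\|R - \I\| = \|\tran{P}(Q(\vc^*) - P)\| = \|P - Q(\vc^*)\|.$$
So it suffices to exhibit a neighborhood of $\I$ on which the logarithm $X := \log R$ is well-defined, is skew-symmetric, and is controlled in norm by $\|R - \I\|$.

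I would take, for example, $\delta_5 = 1/2$, so that $\|R - \I\| < 1/2$ whenever $\|P - Q(\vc^*)\| < \delta_5$. On this set the principal matrix logarithm
$$X := \sum_{k = 1}^{\infty} \frac{(-1)^{k + 1}}{k}(R - \I)^k$$
converges absolutely and satisfies $\me^X = R$. Since $R$ is orthogonal, $\tran{R} = \inv{R}$, so term-by-term transposition of the series yields $\tran{X} = \log(\inv{R}) = -\log R = -X$, whence $X$ is skew-symmetric and is precisely the matrix required by the lemma.

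The desired estimate is then an immediate consequence of a direct series bound: for $\|R - \I\| \le 1/2$,
$$\|X\| \leq \sum_{k = 1}^{\infty} \frac{\|R - \I\|^k}{k} \leq \frac{\|R - \I\|}{1 - \|R - \I\|} \leq 2\,\|R - \I\| = 2\,\|P - Q(\vc^*)\|,$$
so one may take $\omega_1 = 2$. The only mildly delicate step is justifying the identity $\log(\inv{R}) = -\log R$; this follows from the uniqueness of the analytic branch of $\log$ on the disc $\{z \in \CS : |z - 1| < 1\}$ combined with the spectral mapping theorem applied to $R$ (whose eigenvalues lie on the unit circle and, by the smallness of $\|R - \I\|$, in that disc). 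Beyond this point the argument is routine power-series manipulation, so I do not anticipate a genuine obstacle.
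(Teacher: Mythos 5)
Your proof is correct. The paper itself gives no argument for this lemma---it is simply imported from \cite{Friedland1987}---and your self-contained derivation via the principal matrix logarithm series (orthogonal invariance of the $2$-norm to reduce to $\|R-\I\|$, convergence of $\log R$ for $\|R-\I\|\le 1/2$, skew-symmetry from $\tran{R}=\inv{R}$ together with the spectral-mapping justification of $\log(\inv{R})=-\log R$, and the geometric-series bound giving $\omega_1=2$) is exactly the standard route and fills that gap cleanly; the only implicit point worth a word is that the lemma's phrase ``the skew-symmetric $X$'' refers precisely to this principal-logarithm choice, which your construction supplies.
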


Based on Lemma \ref{lem:est_norm_X}, by using the similar arguments
in the proof of \cite[Lemma 3.4]{ShenLi2012}, we can obtain the
following lemma. If we write $\me^{X_k} := \tran{P}_k Q^*$, then
there exists $C > 0$ such that $\|Y_k\| \leq C(\|\vc^{k + 1} -
\vc^*\| + \|X_k\|)$.

\begin{lemma}
\label{lem:est_norm_Ek+1} Suppose that the given eigenvalues
$\{\lambda_i^*\}_{i = 1}^n$ are distinct and the Jacobian matrix
$J(\vc^*)$ is invertible. Then, there exist $\omega_2 > 0$ and $0 <
\delta_6 < \min\{\delta_5, \frac{1}{(1 + \omega_1)C}\}$ such that,
for $k$ sufficiently large, if $\|\vc^{k + 1} - \vc^*\| \leq
\delta_6$ and $\|E_k\| \leq \delta_6$, then
$$
\|E_{k + 1}\| \leq \omega_2 (\|\vc^{k + 1} - \vc^*\| + \|E_k\|^2),
$$
where $\omega_1$ is determined by Lemma $\ref{lem:est_norm_X}$.
\end{lemma}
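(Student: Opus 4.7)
The plan is to track the recursion $E_{k+1} = P_{k+1} - Q^*$ through the Cayley update and relate it to the skew-symmetric exponents $X_k$ and $Y_k$, using the distinct-eigenvalues hypothesis to control the denominators that appear in the definition of $Y_k$. Since $X_k$ is skew-symmetric (because $\me^{X_k} = \tran{P}_k Q^*$ is orthogonal), the matrix $\me^{X_k}$ is orthogonal and $Q^* = P_k\me^{X_k}$, so
\[
E_{k+1} \;=\; P_k U_k - Q^* \;=\; P_k(U_k - \me^{X_k}),
\qquad \|E_{k+1}\| = \|U_k - \me^{X_k}\|,
\]
using that $P_k$ is orthogonal. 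By Lemma \ref{lem:est_norm_X}, $\|X_k\| \leq \omega_1\|E_k\|$, so for $\|E_k\|\leq\delta_6$ the matrix $X_k$ is small, which allows Taylor expansions.

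Next I would split
\[
U_k - \me^{X_k} \;=\; \bigl(U_k - \me^{Y_k}\bigr) + \bigl(\me^{Y_k} - \me^{X_k}\bigr).
\]
The first term is easy: a direct expansion of the Cayley transform $U_k=(\I+\tfrac12 Y_k)(\I-\tfrac12 Y_k)^{-1}$ and of $\me^{Y_k}$ shows they agree up to second order in $Y_k$, so $\|U_k-\me^{Y_k}\| \leq c_1\|Y_k\|^2$ once $\|Y_k\|$ is small enough (which follows from the assumption $\delta_6 < 1/((1+\omega_1)C)$ together with $\|Y_k\|\leq C(\|\vc^{k+1}-\vc^*\|+\|X_k\|)$). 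For the second term, the matrix exponential is locally Lipschitz, so $\|\me^{Y_k}-\me^{X_k}\| \leq c_2\|Y_k-X_k\|$ for $Y_k,X_k$ in a bounded neighborhood of $0$.

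The main obstacle is bounding $\|Y_k-X_k\|$, and this is where the distinct-eigenvalue hypothesis and the structure of $Y_k$ enter. I would introduce an auxiliary matrix $Y_k^*$ defined exactly as $Y_k$ in (\ref{comp:Yij}) but with $A(\vc^{k+1})$ replaced by $A(\vc^*)$. Since $\min_{i\neq j}|\lambda_j^*-\lambda_i^*|>0$ by the distinct-eigenvalue assumption, the off-diagonal entries depend continuously on the numerator, and Lemma \ref{lem:A(c)_Lip} yields
\[
\|Y_k-Y_k^*\| \;\leq\; c_3\,\|A(\vc^{k+1})-A(\vc^*)\| \;\leq\; c_3 L\,\|\vc^{k+1}-\vc^*\|.
\]
For $Y_k^*-X_k$, use $\tran{P}_k A(\vc^*) P_k = \me^{X_k}\Lambda^*\me^{-X_k} = \Lambda^* + X_k\Lambda^*-\Lambda^* X_k + O(\|X_k\|^2)$; reading off off-diagonal entries shows $(\lambda_j^*-\lambda_i^*)[Y_k^*]_{ij} = (\lambda_j^*-\lambda_i^*)[X_k]_{ij} + O(\|X_k\|^2)$, hence $\|Y_k^*-X_k\|\leq c_4\|X_k\|^2 \leq c_4\omega_1^2\|E_k\|^2$.

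Finally I would assemble the pieces. Using the hint $\|Y_k\|\leq C(\|\vc^{k+1}-\vc^*\|+\|X_k\|)$ with $\|X_k\|\leq\omega_1\|E_k\|$, one gets $\|Y_k\|^2 \leq 2C^2(\|\vc^{k+1}-\vc^*\|^2 + \omega_1^2\|E_k\|^2)$, and shrinking $\delta_6$ lets me absorb $\|\vc^{k+1}-\vc^*\|^2\leq\delta_6\|\vc^{k+1}-\vc^*\|$ linearly. Combining
\[
\|E_{k+1}\| \leq c_1\|Y_k\|^2 + c_2\bigl(\|Y_k-Y_k^*\|+\|Y_k^*-X_k\|\bigr)
\]
then gives $\|E_{k+1}\| \leq \omega_2(\|\vc^{k+1}-\vc^*\|+\|E_k\|^2)$ for a suitable $\omega_2$ depending only on $n$, $L$, $\omega_1$, $C$, $\max_i|\lambda_i^*|$ and $\min_{i\neq j}|\lambda_j^*-\lambda_i^*|$. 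The delicate step is really the comparison $Y_k^*\approx X_k$, where the distinct-eigenvalues hypothesis is essential; the rest is bookkeeping.
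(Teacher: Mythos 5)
The paper gives no proof of this lemma: it simply defers to \cite[Lemma 3.4]{ShenLi2012}, recording only the intermediate bound $\|Y_k\| \leq C(\|\vc^{k+1}-\vc^*\|+\|X_k\|)$. Your reconstruction is correct and follows essentially the same route as that cited argument: writing $E_{k+1}=P_k(U_k-\me^{X_k})$, matching the Cayley transform with $\me^{Y_k}$ to second order, and controlling $Y_k-X_k$ through the Lipschitz bound of Lemma \ref{lem:A(c)_Lip} and the spectral gap $\min_{i\neq j}|\lambda_j^*-\lambda_i^*|>0$, with the hypotheses $\|E_k\|\leq\delta_6<\delta_5$ and $\delta_6<1/((1+\omega_1)C)$ used exactly where they are needed (to invoke Lemma \ref{lem:est_norm_X} and to keep $\|Y_k\|$ small, respectively).
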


In order to prove our global convergence result for Algorithm
\ref{al:GBIEP_Cayley}, we introduce some notations. Let $\zeta_1 :=
\frac{2(\sqrt{n} L + \mu_2)}{\mu_3}$ and $\zeta_2 := 2M(\mu_3 +
\zeta_1^\beta)$. Set
\begin{align}
\tau & := \min\left\{1, \left(2M(\mu_3 + \zeta^\beta_1)\right)^{-
\frac{1}{\beta - 1}}, (\omega_2(\zeta_2 + 1))^{-
\frac{1}{\beta - 1}}\right\}, \label{cons:tau}\\
\delta & := \min\{\tau, \delta_2, \delta_3, \delta_4, \delta_6\}
\label{cons:delta}.
\end{align}
Our main global convergence result is as follows.

\begin{theorem}
\label{th:GlobalConvergence} Assume that $\{\vc^k\}$ is generated by
Algorithm $\ref{al:GBIEP_Cayley}$. Suppose that the given
eigenvalues $\{\lambda_i^*\}_{i = 1}^n$ are distinct and $\vc^*$ is
an accumulation point of $\{\vc^k\}$ such that $J(\vc^*)$ is
invertible. Then, $\vrho(\vc^k) - \vec{\lambda}^* \to \vec{0}$ and
$\vc^k \to \vc^*$ as $k \to \infty$. Moreover, the convergence is of
R-order $\beta$.
\end{theorem}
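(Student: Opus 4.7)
The plan is to split the statement into three tasks---residual convergence, iterate convergence, and R-order---and, for the last two, run a single inductive argument on the joint error $e_k := \|\vc^k - \vc^*\| + \|E_k\|$. The bounds needed are already assembled in Lemmas \ref{lem:est_norm_rho(ck)-lambda*}, \ref{lem:est_norm_ck+1-c*}, and \ref{lem:est_norm_Ek+1}, and the constants $\zeta_1, \zeta_2, \tau, \delta$ in (\ref{cons:tau})--(\ref{cons:delta}) are calibrated so that the induction closes. The first task is free: Corollary \ref{cor:convergence3} yields $\vrho(\vc^k) - \vec{\lambda}^* \to \vec{0}$ and, crucially for later, $\eta_k = \overline{\eta}_k$ (so $\vdc^k = \Delta\overline{\vc}^k$) for all large $k$, which is exactly the hypothesis required to invoke Lemma \ref{lem:est_norm_ck+1-c*}.

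For iterate convergence, the earlier corollaries reduce the problem to producing a single index $k_0$ with $\|\vc^{k_0} - \vc^*\| \le \delta$ and $\|E_{k_0}\| \le \delta$; once such $k_0$ exists, Corollary \ref{cor:convergence2} supplies $\vc^k \to \vc^*$. The accumulation-point hypothesis furnishes a subsequence $\vc^{k_i} \to \vc^*$; since each $P_{k_i}$ is orthogonal, we pass to a further subsequence with $P_{k_i} \to \overline{P}$. Continuity and $\vrho(\vc^{k_i}) \to \vec{\lambda}^*$ give $\tran{\bar p_j} A(\vc^*) \bar p_j = \lambda_j^*$ for each $j$; expanding $\bar p_j$ in an eigenbasis of $A(\vc^*)$ shows that the matrix $(\alpha_{ij}^2)$ is doubly stochastic and fixes the strictly ordered vector $\vec{\lambda}^*$, which by the distinctness hypothesis forces $\overline{P}$ to be an orthogonal diagonaliser of $A(\vc^*)$ in the correct order, i.e.\ a valid choice of $Q_*$. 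Consequently $\|E_{k_i}\| \to 0$, delivering the required $k_0$.

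For the R-order step I assume inductively that $\|\vc^k - \vc^*\|, \|E_k\| \le \delta$ and chain the three key lemmas. Lemma \ref{lem:est_norm_ck+1-c*} combined with Lemma \ref{lem:est_norm_rho(ck)-lambda*}, using $\delta \le 1$, $\beta \in (1,2]$ (so $e_k^2 \le e_k^\beta$), and $\|E_k\|_F \le \sqrt{n}\|E_k\|$, produces
\[
\|\vc^{k+1} - \vc^*\| \le 2M(\mu_3 + \zeta_1^\beta)\, e_k^\beta = \zeta_2\, e_k^\beta,
\]
and the constraint $\zeta_2 \tau^{\beta-1} \le 1$ built into (\ref{cons:tau}) keeps $\|\vc^{k+1} - \vc^*\| \le \delta$. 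Lemma \ref{lem:est_norm_Ek+1} then gives $\|E_{k+1}\| \le \omega_2(\zeta_2+1)\, e_k^\beta \le \delta$ by the remaining $\tau$-constraint. Summing yields
\[
e_{k+1} \le C\, e_k^\beta, \qquad C := \zeta_2 + \omega_2(\zeta_2+1),
\]
closing the induction. Setting $u_k := C^{1/(\beta-1)} e_k$ converts this to $u_{k+1} \le u_k^\beta$ with $u_{k_0} < 1$, whence $e_k \le C^{-1/(\beta-1)} u_{k_0}^{\beta^{k-k_0}}$, which is R-order $\beta$ convergence of $\|\vc^k - \vc^*\|$.

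The main obstacle I anticipate is producing the index $k_0$ at which $\|E_{k_0}\|$ is small: nothing in the algorithm or the earlier lemmas directly forces $P_k$ to track $Q_*$ along an arbitrary subsequence where $\vc^k$ approaches $\vc^*$, so the compactness of the orthogonal group together with the distinctness of $\{\lambda_j^*\}$ and the doubly-stochastic argument are essential, and one must be careful about the sign/permutation ambiguity of $Q_*$ in identifying the limit $\overline{P}$ with a valid version of $Q_*$. Everything thereafter is routine verification that the constants in (\ref{cons:tau})--(\ref{cons:delta}) make the one-step contraction self-sustaining.
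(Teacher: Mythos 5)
Your main induction is, modulo bookkeeping, exactly the paper's own proof: the paper likewise starts from an index $k_0$ with $\|\vc^{k_0}-\vc^*\|\le\tau$ and $\|E_{k_0}\|\le\tau$ and chains Lemmas \ref{lem:est_norm_rho(ck)-lambda*}, \ref{lem:est_norm_ck+1-c*} and \ref{lem:est_norm_Ek+1}, the only difference being that it carries the two errors separately and writes the conclusion as $\|\vc^k-\vc^*\|\le\tau\gamma^{\beta^{k-k_0}}$ with $\gamma=\delta/\tau$ instead of passing through your rescaled sequence $u_k$. That part is sound and needs no further comment.

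The problem is the step you yourself identify as the main obstacle. You are right that something must produce an index at which $\|E_{k}\|$ is small --- the paper merely asserts the existence of such a $k_0$, and its Corollaries \ref{cor:convergence2} and \ref{cor:convergence3} carry the undischarged hypothesis ``$\|E_k\|\le\delta_1$ for $k$ sufficiently large'' --- but your doubly-stochastic argument does not close this gap. From $P_{k_i}\to\overline{P}$ and $\vrho(\vc^{k_i})\to\vec{\lambda}^*$ you get $\tran{\bar{\vp}_j}A(\vc^*)\bar{\vp}_j=\lambda_j^*$, i.e. $\vec{\lambda}^*=\tran{S}\vec{\mu}$, where $S=(\alpha_{ij}^2)$ is doubly stochastic and $\vec{\mu}$ is the \emph{unknown} spectrum of $A(\vc^*)$. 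This says only that $\vec{\lambda}^*$ is majorized by $\vec{\mu}$; it does not force $S$ to be a permutation, and distinctness of the $\lambda_j^*$ does not help. Concretely, take $\vec{\mu}=(0,4)$, $\vec{\lambda}^*=(1,3)$ and $S=\frac{1}{4}\left(\begin{smallmatrix}3&1\\1&3\end{smallmatrix}\right)$, which is realized by the orthogonal matrix of rotation by $30$ degrees: all your conditions hold, yet $\overline{P}$ is not an eigenvector matrix of $A(\vc^*)$. Your phrase ``fixes the strictly ordered vector $\vec{\lambda}^*$'' tacitly assumes $\vec{\mu}=\vec{\lambda}^*$, i.e. that the accumulation point already solves the IEP, which is precisely what is at stake; the diagonal Rayleigh quotients alone carry no information about the off-diagonal entries of $\tran{\overline{P}}A(\vc^*)\overline{P}$. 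There is also a secondary logical mismatch: Corollary \ref{cor:convergence2} requires $\|E_k\|\le\delta_1$ for \emph{all} sufficiently large $k$, not at a single index, and Lemma \ref{lem:est_norm_ck+1-c*} relies on Corollary \ref{cor:convergence3}, which relies on Corollary \ref{cor:convergence2}, so producing one good $k_0$ does not by itself license the chain you invoke. Repairing this requires some genuine control of $E_k$ (equivalently of $Y_k$) along the iteration --- an assumption the paper uses implicitly without stating it in the theorem.
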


\begin{proof}
It follows immediately from Corollaries \ref{cor:convergence2} and
\ref{cor:convergence3} that $\vrho(\vc^k) - \vec{\lambda}^* \to
\vec{0}$ and $\vc^k \to \vc^*$ as $k \to \infty$. For the $\tau$
given in (\ref{cons:tau}), there exists $k_0$ sufficiently large
such that $\|\vc^{k_0} - \vc^*\| \leq \tau$ and $\|E_{k_0}\| \leq
\tau$. Set $\gamma := \delta/\tau$, where $\delta$ and $\tau$ are
given in (\ref{cons:delta}) and (\ref{cons:tau}), respectively.
Then, $\gamma \leq 1$. We will show that, for all $k \geq k_0$
sufficiently large,
\begin{align}
\|\vc^k - \vc^*\| & \leq \tau \cdot \gamma^{\beta^{k - k_0}},
\label{est:norm_ck-c*} \\
\|E_k\| & \leq \tau \cdot \gamma^{\beta^{k - k_0}}.
\label{est:norm_Ek}
\end{align}
Suppose that (\ref{est:norm_ck-c*}) and (\ref{est:norm_Ek}) hold for
some $k = \ell \geq k_0$. Consider the case $k = \ell + 1$. Thanks
to Lemma \ref{lem:est_norm_rho(ck)-lambda*}, we have, for all $\ell
\geq k_0$,
\begin{align}
\|\vrho(\vc^\ell) - \vec{\lambda}^*\| & \leq \sqrt{n} L \|\vc^\ell -
\vc^*\| + \mu_2 \|E_\ell\| \nonumber\\
& \leq  \sqrt{n} L \tau \cdot \gamma^{\beta^{\ell - k_0}} + \mu_2
\tau
\cdot \gamma^{\beta^{\ell - k_0}} \nonumber\\
& = (\sqrt{n} L + \mu_2) \tau \cdot \gamma^{\beta^{\ell - k_0}}.
\label{est:norm_vrho-lambda*}
\end{align}
Then, by using Lemma \ref{lem:est_norm_ck+1-c*}, one has that, for
all $\ell \geq k_0$,
\begin{align*}
\|\vc^{\ell + 1} - \vc^*\| & \leq 2M \left(\mu_3 \|E_\ell\|_F^2 +
\left(\frac{2}{\mu_3}\right)^\beta \|\vrho(\vc^\ell) -
\vec{\lambda}^*\|\right) \\
& \leq 2M\mu_3 \tau^2 \left(\gamma^{\beta^{\ell - k_0}}\right)^2 +
2M \left(\frac{2}{\mu_3}\right)^\beta (\sqrt{n}L + \mu_3)^\beta
\tau^\beta \gamma^{\beta^{\ell - k_0 + 1}} \\
& \leq 2M\left[\mu_3 + \left(\frac{2(\sqrt{n}L +
\mu_2)}{\mu_3}\right)^\beta\right] \tau^\beta \gamma^{\beta^{\ell -
k_0
+ 1}} \\
& \leq \tau \gamma^{\beta^{\ell - k_0 + 1}},
\end{align*}
where the last inequality follows from the definition of $\tau$ in
(\ref{cons:tau}). By Lemma \ref{lem:est_norm_Ek+1}, we have, for all
$\ell \geq k_0$,
\begin{align*}
\|E_{\ell + 1}\| & \leq \omega_2 (\|\vc^{\ell + 1} - \vc^*\| +
\|E_\ell\|^2)
\\
& \leq 2M\omega_2 (\mu_3 + \zeta_1^\beta) \tau^\beta
\gamma^{\beta^{\ell - k_0 + 1}} + \omega_2 \tau^2
\left(\gamma^{\beta^{\ell - k_0}}\right)^2 \\
& \leq \omega(2M(\mu_3 + \zeta_1^\beta) + 1) \tau^\beta \cdot
\gamma^{\beta^{\ell - k_0 + 1}} \\
& = \omega_2 (\zeta_2 + 1)\tau^\beta \cdot \gamma^{\beta^{\ell - k_0
+
1}} \\
& \leq \tau \cdot \gamma^{\beta^{\ell - k_0 + 1}}.
\end{align*}
Therefore, we conclude that (\ref{est:norm_ck-c*}) and
(\ref{est:norm_Ek}) hold for all $k \geq k_0$. Moreover, we see from
(\ref{est:norm_ck-c*}) that $\vc^k$ converges to $\vc^*$ with
R-order $\beta$. This completes the proof.
\end{proof}

If $\{\vc^k\}$ generated by Algorithm \ref{al:GBIEP_Cayley}
converges to a solution at which the Jacobian matrix is invertible,
then the ultimate rate of convergence is governed by the choices of
the $\eta_k (k = 0, 1, \ldots)$ as in the local theory of
\cite{BaiChanMorini2004}.

\section{A Modified Global Inexact Newton-Type Method}

We observe from Lemmas \ref{lem:while_terminate_2} and
\ref{lem:exist_Gamma} that, when the Jacobian matrix $J_k$ is
invertible, the condition (\ref{cond:norm_dck_1}) holds for some
$\vdc^k$, and so the backtracking loop terminates after a finite
number of steps. Comparing the condition (\ref{cond:norm_dck_1})
with the condition in the while loop, that is:
$$
\|\vrho(\vc^k + \vdc^k) - \vec{\lambda}^*\| \leq (1 - \xi(1 -
\eta_k)) \|\vrho(\vc^k) - \vec{\lambda}^*\|,
$$
one finds that the condition (\ref{cond:norm_dck_1}) is more
feasible in practice. To improve the practical effectiveness of
Algorithm \ref{al:GBIEP_Cayley}, we give a modified algorithm as
follows.

\begin{algorithm}[t]
\caption{Modified Inexact Newton Backtracking Cayley Transform
Method for IEP} \label{al:GBIEP_Cayley_mod} For any $\vec{c}^0 \in
\RS^n, \eta_{\text{max}} \in [0,1), \xi \in (0,1), 0 < \theta_{min}
< \theta_{max} < 1$. Compute the orthonormal eigenvectors
$\{\vec{q}_i(\vec{c}^0)\}_{i = 1}^n$ and the eigenvalues
$\{\lambda_i(\vec{c}^0)\}_{i = 1}^n$ of $A(\vec{c}^0)$. Let $P_0$
and $\vec{\rho}(\vc^0)$ be defined in (\ref{mat:P0}) and
(\ref{vec:rhoc0}), respectively. For $k = 0, 1, 2, \ldots$ until
convergence do:
\begin{list}
{step \arabic{newlist}.} {\usecounter{newlist}
\setlength{\itemsep}{0em} \setlength{\leftmargin}{1em}
\setlength{\rightmargin}{1em}}
\item
Form $[J_k]_{ij} = \tran{\vp_i(\vc^k)} A_j \vp_i(\vc^k)$ for $1 \leq
i, j \leq n$.
\item
If $J_k$ is singular, perform the same steps as in Algorithm
\ref{al:GBIEP_Cayley}; else perform
\begin{list}
{step 2.\arabic{newlist}.} {\usecounter{newlist}
\setlength{\itemsep}{0em} \setlength{\leftmargin}{1em}
\setlength{\rightmargin}{1em}}
\item
Solve (\ref{eq:appr_Jaco_eq}) for $\vdc^k$ such that condition
(\ref{cond:res_cond}) is satisfied.
\item
Set $\Gamma_k := M_k \frac{1 + \eta_{\text{max}}}{1 -
\eta_{\text{max}}}$, where $ M_k := \|\inv{J}_k\|$. Perform the
backtracking loop: \\ while $\|\vdc^k\|
> \Gamma_k(1 - \eta_k) \|\vrho(\vc^k) - \vec{\lambda}^*\|$ do:
$$
\text{choose\ } \theta \in [\theta_{\min}, \theta_{\max}], \text{\
then update } \vdc^k \leftarrow \theta \vdc^k \text{ and } \eta_k
\leftarrow 1 - \theta(1 - \eta_k).
$$
\item
The same as step 7 in Algorithm \ref{al:GBIEP_Cayley}.
\end{list}
\end{list}
\end{algorithm}

In our numerical examples, we will report the numerical performance
of Algorithm \ref{al:GBIEP_Cayley_mod} instead of Algorithm
\ref{al:GBIEP_Cayley}, since Algorithm \ref{al:GBIEP_Cayley_mod} is
more effective and feasible in practice.

\section{Numerical Examples}

In this section, we illustrate the effectiveness of Algorithm
\ref{al:GBIEP_Cayley_mod} for solving IEP on two examples. The tests
were carried out in MATLAB 7.0.

The given parameters used in our algorithm were $\eta_0 = 0.5,
\eta_{\max} = 0.9, \xi = 10^{-4}, \theta_{\min} = 0.1$ and
$\theta_{\max} = 0.9$. In the while loop, we choose $\theta \in
[\theta_{\min}, \theta_{\max}]$ to minimize $\|\vrho(\vc^k + \theta
\vdc^k) - \vec{\lambda}^*\|$ if 80 iterations of the backtracking
loop fail to produce the sufficient decrease in $\|\vrho(\vc) -
\vec{\lambda}^*\|$.

Linear systems (\ref{eq:solve_wi}) and (\ref{eq:appr_Jaco_eq}) are
all solved iteratively by the QMR method \cite{Freund1991} using the
MATLAB \textsf{qmr} function.  In order to guarantee the
orthogonality of $P^{\text{new}}$ in (\ref{eq:solve_wi}), this
system is solved up to  mathine precision \textsf{eps} ($\approx 2.2
\times 10^{-16}$). The stopping tolerances for system
(\ref{eq:appr_Jaco_eq}) is given as in (\ref{cons:oetak}).
The inner loop stopping tolerance for (\ref{eq:appr_Jaco_eq}) is
given by (\ref{cons:oetak}). The stopping criterion of the outer
iteration in our algorithm is
\begin{equation}
\label{eq:error} \|\tran{P}_k A(\vc^k)P_k - \Lambda_*\|_F \leq
10^{-10}.
\end{equation}

EXAMPLE 1. This is an inverse Toeplitz eigenvalue problem (see
\cite{Trench1997} for more detail on this inverse problem) with
distinct eigenvalues. The basis matrices $\{A_i\}_{i = 1}^5$ are
given as follows:
\begin{align*}
A_1 & = \left[\begin{array}{rrrrr} 1 & 0 & 0 & 0 & 0 \\
0 & 1 & 0 & 0 & 0 \\
0 & 0 & 1 & 0 & 0 \\
0 & 0 & 0 & 1 & 0 \\
0 & 0 & 0 & 0 & 1
\end{array}\right], \ \ \
A_2 = \left[\begin{array}{ccccc} 0 & 1 & 0 & 0 & 0 \\
1 & 0 & 1 & 0 & 0 \\
0 & 1 & 0 & 1 & 0 \\
0 & 0 & 1 & 0 & 1 \\
0 & 0 & 0 & 1 & 0
\end{array}\right], \ \ \
A_3 = \left[\begin{array}{ccccc} 0 & 0 & 1 & 0 & 0 \\
0 & 0 & 0 & 1 & 0 \\
1 & 0 & 0 & 0 & 1 \\
0 & 1 & 0 & 0 & 0 \\
0 & 0 & 1 & 0 & 0
\end{array}\right],\\
A_4 & = \left[\begin{array}{rrrrr} 0 & 0 & 0 & 1 & 0 \\
0 & 0 & 0 & 0 & 1 \\
0 & 0 & 0 & 0 & 0 \\
1 & 0 & 0 & 0 & 0 \\
0 & 1 & 0 & 0 & 0
\end{array}\right], \ \ \
A_5 = \left[\begin{array}{ccccc} 0 & 0 & 0 & 0 & 1 \\
0 & 0 & 0 & 0 & 0 \\
0 & 0 & 0 & 0 & 0 \\
0 & 0 & 0 & 0 & 0 \\
1 & 0 & 0 & 0 & 0
\end{array}\right].
\end{align*}
The given real eigenvalues and a solution, respectively, are
$$
\vec{\lambda}^* = \tran{(-5.2361, -1.5876, -0.7639, -0.5555,
18.1431)} \  \ \text{and} \  \ \vc^* = \tran{(2, 3, 4, 5, 6)}.
$$
In Table \ref{tab:example1}, we report our numerical results for
various starting points:
\begin{align*}
(\text{a}) \ \ \vc^0 & = \tran{(1,2,3,4,5)};\ \ \ (\text{b}) \ \
\vc^0  = \tran{(1, 5, 10, 15, 20)};\ \
(\text{c}) \ \ \vc^0  = \tran{(11,12,13,14,15)};\\
(\text{d}) \ \ \vc^0 & = \tran{(21, 38, 46, 63, 81)}; \ \ (\text{e})
\ \ \vc^0  = \tran{(101, 112, 123, 134, 145)},
\end{align*}
where $\vc^0$, errs, ite. and $\vc^*$ stand for the starting point,
the error value of the left hand side of (\ref{eq:error}) for the
last three iterates of the algorithm, the number of outer iteration
and the accumulation point corresponding to the starting point.

\begin{table}[t]
\begin{center}
\begin{tabular}{l@{\hspace{1.2cm}}l@{\hspace{1.2cm}}l@{\hspace{1.8cm}}l@{\hspace{1.8cm}}l@{\hspace{0.3cm}}}
\hline
$\vc^0$ &  & $\beta = 1.5 $  & $\beta = 1.8 $  & $\beta = 2.0 $ \\
\hline
(a) & errs & 1.0659e-02  & 1.0659e-02  & 1.0659e-02\\
 &  & 1.1323e-05  & 1.1323e-05  & 1.1323e-05 \\
 &  & 8.0567e-12  & 8.0567e-12  & 8.0567e-12 \\
 & ite. & 6 & 6  & 6 \\
 & $\vc^*$ & \multicolumn{3}{l}{$\tran{(2.0000,3.2926, 3.4471, 4.9014, 6.5529)}$}\\
(b) & errs & 3.8846e-03  & 3.8846e-03  & 3.8846e-03 \\
 &  & 1.2491e-06  & 1.2491e-06  & 1.2491e-06 \\
 &  & 7.9990e-14  & 7.9990e-14  & 7.9990e-14 \\
 & ite. & 9  & 9  & 9 \\
 & $\vc^*$ & \multicolumn{3}{l}{$\tran{(2, 3, 4, 5, 6)}$}\\
(c) & errs & 5.0389e-05  & 5.0389e-05  & 5.0389e-05 \\
 &  & 3.5838e-10  & 3.5838e-10  & 3.5838e-10 \\
 &  & 5.0337e-15  & 5.0337e-15  & 5.0337e-15 \\
 & ite. & 13 & 13  & 13 \\
 & $\vc^*$ & \multicolumn{3}{l}{$\tran{(2.0000,3.2926, 3.4471, 4.9014, 6.5529)}$}\\
(d) & errs & 1.3806e-04  & 1.3806e-04  & 1.3806e-04 \\
 &  & 6.4118e-09  & 6.4118e-09  & 6.4118e-09 \\
 &  & 9.8493e-15  & 9.8493e-15  & 9.8493e-15 \\
 & ite. & 75 & 75  & 75 \\
 & $\vc^*$ & \multicolumn{3}{l}{$\tran{(2.0000,3.2926, 3.4471, 4.9014, 6.5529)}$}\\
(e) & errs & 1.2028e-04  & 7.7624e-03  & 7.7624e-03 \\
 &  & 3.8608e-10  & 6.1243e-06  & 6.1243e-06 \\
 &  & 5.3680e-15  & 2.1634e-12  & 2.1634e-12 \\
 & ite. & 13 & 5  & 5 \\
 & $\vc^*$ & \multicolumn{3}{l}{$\tran{(2, 3, 4, 5, 6)}$}\\
\hline
\end{tabular}
\end{center}
\caption{Numerical results for Example 1. } \label{tab:example1}
\end{table}

EXAMPLE 2. This is a Toeplitz-plus-Hankel inverse eigenvalue problem
(see \cite{Diele2004} for more details on this inverse problem) with
distinct eigenvalues. The basis matrices $\{A_i\}_{i = 1}^{7}$ are
given as follows:
\begin{align*}
A_1 & = \left[\begin{array}{rrrrrrr}
-1 & 0 & 0 & 0 & 0  & 0 & 0\\
0 & 1 & 0 & 0 & 0  & 0 & 0\\
0 & 0 & 1 & 0 & 0  & 0 & 0\\
0 & 0 & 0 & 1 & 0  & 0 & 0\\
0 & 0 & 0 & 0 & 1  & 0 & 0\\
0 & 0 & 0 & 0 & 0  & 1 & 0 \\
0 & 0 & 0 & 0 & 0  & 0 & 1
\end{array}\right], \ \ \
A_2 = \left[\begin{array}{rrrrrrr}
0 & -1 & 0 & 0 & 0  & 0 & 0\\
-1 & 0 & 1 & 0 & 0  & 0 & 0\\
0 & 1 & 0 & 1 & 0   & 0 & 0\\
0 & 0 & 1 & 0 & 1  & 0 & 0\\
0 & 0 & 0 & 1 & 0  & 1 & 0\\
0 & 0 & 0 & 0 & 1  & 0 & 1 \\
0 & 0 & 0 & 0 & 0  & 1 & 0
\end{array}\right],
\\
A_3 & = \left[\begin{array}{rrrrrrr}
0 & 0 & -1 & 0 & 0  & 0 & 0\\
0 & -2 & 0 & 1 & 0  & 0 & 0\\
-1 & 0 & 0 & 0 & 1  & 0 & 0\\
0 & 1 & 0 & 0 & 0  & 1 & 0\\
0 & 0 & 1 & 0 & 0  & 0 & 1\\
0 & 0 & 0 & 1 & 0  & 0 & 0 \\
0 & 0 & 0 & 0 & 1  & 0 & 0
\end{array}\right], \ \ \
A_4  = \left[\begin{array}{rrrrrrr}
0 & 0 & 0 & -1 & 0  & 0 & 0\\
0 & 0 & -2 & 0 & 1  & 0 & 0\\
0 & -2 & 0 & 0 & 0  & 1 & 0\\
-1 & 0 & 0 & 0 & 0  & 0 & 1\\
0 & 1 & 0 & 0 & 0  & 0 & 0 \\
0 & 0 & 1 & 0 & 0  & 0 & 0 \\
0 & 0 & 0 & 1 & 0  & 0 & 0
\end{array}\right],
\\
A_5 & = \left[\begin{array}{rrrrrrr}
0 & 0 & 0 & 0 & -1  & 0 & 0\\
0 & 0 & 0 & -2 & 0  & 1 & 0\\
0 & 0 & -2 & 0 & 0  & 0 & 1\\
0 & -2 & 0 & 0 & 0  & 0 & 0\\
-1 & 0 & 0 & 0 & 0  & 0 & 0 \\
0 & 1 & 0 & 0 & 0  & 0 & 0 \\
0 & 0 & 1 & 0 & 0  & 0 & 0
\end{array}\right], \ \ \
A_6 = \left[\begin{array}{rrrrrrr}
0 & 0 & 0 & 0 & 0  & -1 & 0\\
0 & 0 & 0 & 0 & -2  & 0 & 1\\
0 & 0 & 0 & -2 & 0  & 0 & 0\\
0 & 0 & -2 & 0 & 0  & 0 & 0\\
0 & -2 & 0 & 0 & 0  & 0 & 0 \\
-1 & 0 & 0 & 0 & 0  & 0 & 0 \\
0 & 1 & 0 & 0 & 0  & 0 & 0
\end{array}\right],
\\
A_7 & = \left[\begin{array}{rrrrrrr}
0 & 0 & 0 & 0 & 0  & 0 & -1\\
0 & 0 & 0 & 0 & 0  & -2 & 0\\
0 & 0 & 0 & 0 & -2  & 0 & 0\\
0 & 0 & 0 & -2 & 0  & 0 & 0\\
0 & 0 & -2 & 0 & 0  & 0 & 0 \\
0 & -2 & 0 & 0 & 0  & 0 & 0 \\
-1 & 0 & 0 & 0 & 0  & 0 & 0
\end{array}\right].
\end{align*}
The given real eigenvalues are
$$
\vec{\lambda}^* = \tran{(-35.4513, -13.6805, -9.5675, -8.5489,
8.7666,11.8220, 20.6596)},
$$
and $\vc^* = \tran{(2, 3, 4, 5, 6, 7, 8)}$ is a solution. In Table
\ref{tab:example2}, we report our numerical results for various
starting points:
\begin{align*}
(\text{a}) \ \ \vc^0 & = \tran{(1,2,3,4,5,6,7)};\ \ \ (\text{b}) \ \
\vc^0  = \tran{(1,3,5,7,9,11,13)};\\
(\text{c}) \ \ \vc^0 & = \tran{(11,13,15,17,19,21,23)};\ \
(\text{d}) \ \ \vc^0  = \tran{(50,52,56,58,62,65,68)}; \\
(\text{e}) \ \ \vc^0 & = \tran{(101,102,103,104,106,108,110)}.
\end{align*}

\begin{table}[t]
\begin{center}
\begin{tabular}{l@{\hspace{1.2cm}}l@{\hspace{1.2cm}}l@{\hspace{1.5cm}}l@{\hspace{1.5cm}}l@{\hspace{0.3cm}}}
\hline
$\vc^0$ &  & $\beta = 1.5 $  & $\beta = 1.8 $  & $\beta = 2.0 $ \\
\hline
(a) & errs & 8.2730e-03  & 8.2730e-03  & 8.2730e-03\\
 &  & 2.2866e-05  & 2.2866e-05  & 2.2866e-05 \\
 &  & 5.5845e-11  & 5.5845e-11  & 5.5845e-11 \\
 & ite. & 6 & 6  & 6 \\
 & $\vc^*$ & \multicolumn{3}{l}{$\tran{(2, 3, 4, 5, 6, 7, 8)}$}\\
(b) & errs & 8.2129e-03  & 4.0190e-03  & 1.7235e-04 \\
 &  & 2.0027e-05  & 1.5988e-06  & 3.7839e-09 \\
 &  & 5.0828e-11  & 2.8276e-13  & 1.9386e-14 \\
 & ite. & 7  & 7  & 7 \\
 & $\vc^*$ & \multicolumn{3}{l}{$\tran{(2.8703, 2.0639, 4.9434, 4.1053, 6.7530, 6.3287, 8.4794)}$}\\
(c) & errs & 3.4941e-03  & 3.4941e-03  & 3.4941e-03 \\
 &  & 2.5608e-06  & 2.5608e-06  & 2.5608e-06 \\
 &  & 6.7550e-13  & 6.7550e-13  & 6.7550e-13 \\
 & ite. & 11 & 11  & 11 \\
 & $\vc^*$ & \multicolumn{3}{l}{$\tran{(2.8703, 2.0639, 4.9434, 4.1053, 6.7530, 6.3287, 8.4794)}$}\\
(d) & errs & 2.9914e-04  & 2.9914e-04  & 2.9914e-04 \\
 &  & 8.2616e-09  & 8.2616e-09  & 8.2616e-09 \\
 &  & 1.8946e-14  & 1.8946e-14  & 1.8946e-14 \\
 & ite. & 13 & 13  & 13 \\
 & $\vc^*$ & \multicolumn{3}{l}{$\tran{(1.1787, -0.0035, 2.0401, 1.6976, 5.3794, 6.2068, 8.5273)}$}\\
(e) & errs & 2.1008e-04  & 2.1008e-04  & 2.1008e-04 \\
 &  & 1.5703e-08  & 1.5703e-08  & 1.5703e-08 \\
 &  & 2.2331e-14  & 2.2331e-14  & 2.2331e-14 \\
 & ite. & 13 & 13  & 13 \\
 & $\vc^*$ & \multicolumn{3}{l}{$\tran{(2.8703, 2.0639, 4.9434, 4.1053, 6.7530, 6.3287, 8.4794)}$}\\
\hline
\end{tabular}
\end{center}
\caption{Numerical results for Example 2. } \label{tab:example2}
\end{table}

We observe from Tables 1 and 2 that our algorithm is convergent for
different starting points. We also see that our algorithm converges
to a solution of the IEP, which is not necessarily equal to the
original one.


\end{document}